\documentclass[12pt]{amsart}
\usepackage{amssymb,amsmath,amsthm,mathrsfs,multirow,enumerate,mathdots}

\usepackage{color}
\usepackage{graphicx} 
\usepackage[all]{xy}
\usepackage{enumitem}
\usepackage{breqn}
\usepackage{tikz}




\numberwithin{equation}{section}

\newtheorem{thm}{Theorem}[section]
\newtheorem{lem}[thm]{Lemma}
\newtheorem{cor}[thm]{Corollary}
\newtheorem{prop}[thm]{Proposition}

\theoremstyle{definition}
\newtheorem{ex}[thm]{Example}

\newtheorem{rmk}[thm]{Remark}

\theoremstyle{definition} \numberwithin{equation}{section}

\newcommand{\G}{\mathrm{G}}
\newcommand{\Cay}{\mathrm{Cay}}
\newcommand{\Spec}{\mathrm{Spec}}
\newcommand{\QFR}{\mathrm{QFR}}
\pagestyle{plain}

\begin{document}

\title[QFR on unitary Cayley graphs]{Quantum fractional revival on unitary Cayley
graphs over finite commutative rings}

\author{Saowalak Jitngam, Poom Kumam and Songpon Sriwongsa$^*$}

\thanks{*Corresponding Author}

\address{Saowalak Jitngam \\ Department of Mathematics \\ Faculty of Science
	\\ King Mongkut's University of Technology Thonburi (KMUTT) \\ 126 Pracha Uthit Rd., Bang Mod, Thung Khru \\ Bangkok 10140, Thailand}
\email{\tt saowalak.jitn@mail.kmutt.ac.th}

\address{Poom Kumam \\ Department of Mathematics \\ Faculty of Science
	\\ King Mongkut's University of Technology Thonburi (KMUTT) \\ 126 Pracha Uthit Rd., Bang Mod, Thung Khru \\ Bangkok 10140, Thailand}
\email{\tt poom.kum@kmutt.ac.th}

\address{Songpon Sriwongsa \\ Department of Mathematics \\ Faculty of Science
	\\ King Mongkut's University of Technology Thonburi (KMUTT) \\ 126 Pracha Uthit Rd., Bang Mod, Thung Khru \\ Bangkok 10140, Thailand}
\email{\tt  songponsriwongsa@gmail.com}

\keywords{Local rings, perfect state transfer, quantum fractional revival, unitary Cayley graph}

\subjclass[2020]{05C50, 15A18}

\begin{abstract}
In this paper, we investigate the existence of quantum fractional revival in unitary Cayley graphs over finite commutative rings with identity. We characterize all finite local rings that permit quantum fractional revival in their unitary Cayley graphs. Additionally, we present results for the case of finite commutative rings, as they can be expressed as products of finite local rings.
\end{abstract}

\date{}

\maketitle

\section{Introduction}
Let $G$ be an undirected simple graph whose vertex set is $V(G) = \{v_1, \ldots, v_n \}$ and its adjacency matrix is denoted by $A_G$. The {\it transition matrix} of $G$ with respect to $A_G$ is defined by:
\begin{equation*}
    H(t) := \exp(itA_G)= \sum_{n\ge 0} \frac{(it)^n}{n!}A_G^n \hspace{4mm} \text{for all} \hspace{4mm} t \ge 0, \text{ where } i = \sqrt{-1}.
\end{equation*} 
Note that $H(t)$ is symmetric and we have $ \overline{H(t)} = H(t)^{-1} $, where $\overline{ \  \cdot  \ }$ denotes the complex conjugate. Furthermore, $H(t)$ is unitary, which implies that $(\overline{H(t)})^T = H(t)^{-1}$. The matrix $H(t)$ is referred to a continuous quantum walk. It is a concept in
quantum physics and quantum theory, where quantum walks are used in various applications. Quantum computers, for instance, leverage quantum walks to perform operations on graphs, including Grover's study \cite{GL} and Farhi and Gutmann's algorithm involving decision trees \cite{FE}. Furthermore, quantum walks on graphs in quantum networks for transferring states were proposed by Bose in 2003 \cite{BS}.


We say that $G$ has \textit{quantum fractional revival} (QFR) from a vertex $v_j$ to a vertex $v_l$ at time $t$ if 
\begin{equation} \label{defn}
    |H(t)_{j, j}|^2 + |H(t)_{j, l}|^2 = 1.
\end{equation}

Let $\vec{e}_s$ denote the standard basis vector in $\mathbb{C}^n$ indexed by the vertex $v_s$ in $G$. It is straightforward to verify that (\ref{defn}) is equivalent to
\[
H(t)\vec{e}_j = \alpha \vec{e}_j + \beta \vec{e}_l,
\]
where $\alpha$ and $\beta$ are complex numbers such that $|\alpha|^2 + |\beta|^2 = 1$. Note that QFR is a generalization of {\it perfect state transfer} (PST) that occurs if $\alpha = 0$. Moreover, if $\beta = 0$, then we say that $G$ is {\it periodic} at the vertex $v_j$. For convenience, when QFR is mentioned, it is assumed to be non-periodic, i.e., $\beta \neq 0$.

QFR was introduced in \cite{RM} as a physical phenomenon in quantum state transfers. It is a process in which, during time evolution, the quantum state cannot be fully transferred to another vertex. Instead, partial state revival occurs at various vertices on a graph, potentially involving the return of portions of the state at different times. 
In physics, the wave function in quantum systems, particularly in the infinite potential well, is analyzed using the fractional revival formalism. This formalism represents the wave function as a superposition of translated copies of the initial wave function, arranged in a parity-conserving manner \cite{AD}. In 2014, Dooley and Spiller focused on studying QFR, multiple-Schr{\"o}dinger-cat states, and quantum carpets in interactions between qubits, which is a key part of quantum physics \cite{DS}. In the present decade, the study of QFR has become increasingly integrated with both physics and mathematics, particularly in the fields of graph theory and spectral analysis.

The mathematical study of quantum state transfer, including PST and QFR, has garnered significant attention in recent years. PST was introduced by Christandl \cite{CM} and has been widely studied since then. In 2012, Godsil provided a comprehensive overview of PST in graphs and related questions \cite{GC2}. For a dihedral group $D_n$, the PST on the Cayley graph
$\text{Cay}(D_n, S)$ was studied in \cite{CX}. The work \cite{CX2} examined weighted Cayley graphs from abelian groups, providing a unified way to describe periodicity and PST. 
Building on this, further investigation for PST in semi-Cayley graphs from abelian groups by proving necessary and sufficient conditions for its occurrence was done in \cite{AM}. Furthermore, the analysis of bi-Cayley graphs from abelian groups, establishing conditions under which PST is possible, was presented in \cite{WS}. Beyond these, the work on PST in Cayley graphs derived from abelian groups with cyclic Sylow-2 subgroups can be seen in \cite{AAS}. In \cite{AM2}, the authors studied PST in Cayley graphs from dicyclic groups, using representations of the dihedral group $D_n$ to formulate conditions for PST. 
 Moreover, necessary and sufficient conditions for PST in Cayley graphs from groups of order $8n$ were discussed in \cite{KA}.
  
In addition to Cayley graphs from groups, PST on graphs from rings has also attracted the interest of many researchers. The study of integral circulant graphs (ICG), offering conditions for identifying which of these graphs allow PST based on their eigenvalues was given in \cite{BM2}. Furthermore, it was revealed that if the two divisors of an ICG are relatively prime, the structure of the graph changes significantly. A similar restrictive result is found in unitary Cayley graphs, where only the complete graph of $2$ vertices $K_2$ and the cycle graph of $4$ vertices $C_4$ permit PST \cite{BM}. The exploration of PST extends beyond these graphs to structures over finite local rings. In \cite{MY}, the authors used the eigenvalues and eigenvectors of a unitary Cayley graph over a finite local ring to determine the conditions under which PST can occur. The work was later expanded to the case of finite commutative ring and gcd-graphs, demonstrating that PST principles apply to an even broader class of graphs \cite{TI}. Various studies have continued to develop and refine these ideas, including those mentioned in \cite{CWC, KVM}, further advancing the understanding of PST in different graph structures.

Later, a study on QFR was conducted, focusing on fractional revival in XX quantum spin chains \cite{GVX}. This research presents models with two parameters that combine isospectral deformations and Para-Krawtchouk polynomials, allowing for both PST and QFR. Building on this work, another study explored graphs that support balanced fractional revival, establishing a connection between quantum walks on hypercubes and extended Krawtchouk spin chains \cite{BPA}. Furthermore, in 2019, it was shown that if a graph $G$ exhibits QFR from a vertex $u$ to a vertex $v$, then it also exhibits QFR from $v$ to $u$ at the same time, reinforcing the symmetry in such quantum systems \cite{CA}. The paper \cite{CA2} explored the conditions needed for QFR in paths and cycles, relating these conditions to state transfer and mixing processes. In graphs, QFR allows one vertex to be represented as a mix of two, enabling entanglement in quantum networks. Additionally, a framework for QFR in spin networks has been proposed, broadening the idea of cospectral vertices and addressing related questions from Chan et al. \cite{CA3}. It is known that PST requires cospectral vertices, while infinite unweighted graphs show QFR between non-cospectral vertices and overlapping pairs \cite{GC3}. 
The study on characterization of QFR between twin vertices in a weighted graph and between the tips of double cones using adjacency, Laplacian, and signless Laplacian matrices was done in \cite{MH}. The authors of \cite{WJ} examined QFR in semi-Cayley graphs over finite abelian groups, outlining the necessary conditions, the need for integrality, and minimum revival time. This includes examples from generalized dihedral and dicyclic groups. Subsequent research also looked into the existence of QFR in Cayley graphs over finite abelian groups \cite{WJ2}. Recently, QFR in unitary Cayley graphs over $\mathbb{Z}_n$ has been analyzed by Soni et al. \cite{SR}.

\smallskip 
The aim of this paper is as follows. Let $R$ be a ring with identity and $R^\times$ a unit group of $R$. The \textit{unitary Cayley graph of} $R$, denoted by $\G_R = \Cay (R,R^\times )$, is a Cayley graph with vertex set $R$ and edge set 
$$
\{\{a,b\} \mid a,b \in R  \text{ and }  a - b \in R^{\times} \}.
$$
Motivated by the aforementioned references, this paper investigates the existence of $\QFR$ on unitary Cayley graphs over finite commutative rings with identity. Specifically, we establish a sufficient and necessary condition for $\G_R$ to have QFR when $R$ is a finite local ring. Furthermore, we present partial results for the case when $R$ is a product of finite local rings. 

\section{Preliminaries}

Let $G$ be an undirected simple graph on $n$ vertices. Then its adjacency matrix $A_G$ is symmetric. Suppose that $A_G$ has $K$ distinct eigenvalues. By a well-known fact from linear algebra, we have the eigenspace orthogonal decomposition as follows:
\[
\mathbb{C}^n=W_1 \oplus W_2 \oplus \cdots \oplus W_K,
\]
 where $W_j$ is an eigenspace corresponding to eigenvalue $\theta_j$ and spanned by orthogonal basis $\{ \Vec{u}_{j_1}, \Vec{u}_{j_2}, \ldots , \Vec{u}_{j_{s_j}} \} $ for some $s_j \in \mathbb{N}$ and for all $j \in \{ 1, 2,\ldots, K \}$ \cite[Theorem 6.3]{FSH}.
 
 For each $j \in \{ 1,2,\ldots,K \}$, let $E_j$ be the orthogonal projection of $\mathbb{C}^n$ into $W_j$. Then the $r$th column of the standard matrix of $E_j$ is given by
     \begin{equation} \label{Ej formula}
         E_j(\Vec{e_r}) =  \sum_{t = 1}^{s_j} \langle \Vec{e}_r,\Vec{u}_{j_t} \rangle \frac{\Vec{u}_{j_t}}{||\Vec{u}_{j_t}||^2}
     \end{equation}
     for all $r \in \{1,2,\cdots ,n\}$, where $\vec{e}_r$ is a standard unit vector whose $r$th entry is $1$ and $0$ elsewhere. 
     
     The Spectral Theorem \cite[Theorem 6.25]{FSH} implies the following properties:
 \begin{enumerate}
      [label=(\roman*)]
     \item  $E_j E_l = \delta_{jl}E_l$ for $1\le j,l \le K$, where $\delta_{jl}$ is the Kronecker delta,
     \item  $\sum_{j = 1}^K E_j = I_n$,
     \item  $\sum_{j = 1}^K \theta_j E_j = A_G$.
     \end{enumerate}
 
Thus we have the identity for the transition matrix of $G$
\begin{align}\label{H(t)}
  H(t)=\sum_{r=1}^{K}\exp(i\theta_r t)E_r.
\end{align}

The following lemma is the key tool for this work.

 \begin{lem}\label{lemmaidem}
   Under the above setting, $G$ has $\QFR$ from a vertex $v_j$ to a vertex $v_l$ at time $t$ if and only if there are constants $\alpha$ and $\beta \neq 0$ with $|\alpha|^2+|\beta|^2=1$ such that 
    $$
      \exp(it\theta_r) E_r \vec{e}_j = \alpha E_r \vec{e}_j + \beta E_r \vec{e}_l,
    $$
    for all $r \in \{1, 2, \ldots, K \}$.
\end{lem}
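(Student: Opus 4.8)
The plan is to translate the definition of QFR, namely $H(t)\vec{e}_j = \alpha\vec{e}_j + \beta\vec{e}_l$ with $|\alpha|^2+|\beta|^2=1$, into the spectral language provided by the projections $E_r$. The entire argument will hinge on the spectral decomposition (\ref{H(t)}) together with property (ii), $\sum_{r=1}^K E_r = I_n$, which lets me expand any standard basis vector as $\vec{e}_s = \sum_{r=1}^K E_r\vec{e}_s$. The key structural fact I would exploit is that the images $W_r = E_r(\mathbb{C}^n)$ are mutually orthogonal, so a vector in $\mathbb{C}^n$ is zero if and only if each of its projections onto the $W_r$ is zero; equivalently, an identity of the form $\sum_r \vec{w}_r = \sum_r \vec{w}'_r$ with $\vec{w}_r, \vec{w}'_r \in W_r$ forces $\vec{w}_r = \vec{w}'_r$ for every $r$. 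This componentwise uniqueness is what converts a single vector equation into $K$ separate equations indexed by $r$.

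For the forward direction, I would start from $H(t)\vec{e}_j = \alpha\vec{e}_j + \beta\vec{e}_l$ and substitute (\ref{H(t)}) on the left to get $\sum_r \exp(i\theta_r t)E_r\vec{e}_j$, while on the right I rewrite $\alpha\vec{e}_j + \beta\vec{e}_l = \alpha\sum_r E_r\vec{e}_j + \beta\sum_r E_r\vec{e}_l = \sum_r(\alpha E_r\vec{e}_j + \beta E_r\vec{e}_l)$ using property (ii). Since $\exp(i\theta_r t)E_r\vec{e}_j \in W_r$ and likewise $\alpha E_r\vec{e}_j + \beta E_r\vec{e}_l \in W_r$, the orthogonality of the eigenspaces forces equality term by term, yielding exactly $\exp(it\theta_r)E_r\vec{e}_j = \alpha E_r\vec{e}_j + \beta E_r\vec{e}_l$ for each $r$. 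The conditions $\beta\neq 0$ and $|\alpha|^2+|\beta|^2=1$ carry over unchanged.

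The converse is the cleaner direction: assuming the $K$ projected equations hold, I would simply sum them over $r$. The left side becomes $\sum_r \exp(i\theta_r t)E_r\vec{e}_j = H(t)\vec{e}_j$ by (\ref{H(t)}), and the right side becomes $\alpha\sum_r E_r\vec{e}_j + \beta\sum_r E_r\vec{e}_l = \alpha\vec{e}_j + \beta\vec{e}_l$ by property (ii). Thus $H(t)\vec{e}_j = \alpha\vec{e}_j + \beta\vec{e}_l$ with $\beta\neq 0$ and $|\alpha|^2+|\beta|^2=1$, which is precisely the vector form of QFR stated in the excerpt. I would close by noting that this vector equation is equivalent to (\ref{defn}), a point already established in the text.

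I do not anticipate a serious obstacle, as this is essentially a bookkeeping lemma; the only point requiring care is justifying the term-by-term matching in the forward direction, and the main thing to state explicitly is that the decomposition of a vector into its components in the orthogonal eigenspaces $W_1,\ldots,W_K$ is unique. Once that uniqueness is invoked, the equivalence follows directly from linearity of the projections and the spectral identity (\ref{H(t)}).
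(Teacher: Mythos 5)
Your proposal is correct and follows essentially the same route as the paper: expand $H(t)$ via the spectral identity, match components in each eigenspace for the forward direction, and sum over $r$ for the converse. The only cosmetic difference is that the paper extracts the $r$th component by multiplying both sides by $E_r$ and using $E_rE_s=\delta_{rs}E_s$, whereas you invoke uniqueness of the orthogonal decomposition — these are the same argument.
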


\begin{proof}
Suppose that $G$ has QFR from vertex $v_j$ to vertex $v_l$ at time $t$. Then there exist $\alpha, \beta \in \mathbb{C}, \beta \neq 0$ such that $|\alpha|^2 + |\beta|^2 = 1$ and
$H(t)\vec{e}_j = \alpha \vec{e}_j + \beta\vec{e}_l$. Equation (\ref{H(t)}) implies $\sum_{s=1}^{K}\exp(i\theta_s t)E_s \vec{e}_j = \alpha \vec{e}_j + \beta\vec{e}_l$. Fixing $r \in \{1, 2, \ldots, K \}$ and multiplying $E_r$ on both sides, we obtain $\exp(i\theta_r t)E_r \vec{e}_j = \alpha E_r \vec{e}_j + \beta E_r \vec{e}_l$ from the properties of $E_r$'s mentioned above.

Conversely, assume that there are constants $\alpha$ and $\beta \neq 0$ such that $|\alpha|^2+|\beta|^2=1$ and
$
    \exp (i\theta_r t)E_r\vec{e}_j = \alpha E_r \vec{e}_j + \beta E_r \vec{e}_l
$
for all $r \in \{1,2,\ldots, K \}$.
Thus,
\begin{align*}
      H(t)\vec{e}_j &= \sum_{r=1}^{K}\exp{(i\theta_r t)}E_r \vec{e}_j \\
    &= \sum_{r=1}^{K}(\alpha E_r \vec{e}_j + \beta E_r \vec{e}_l) \\
    &= \alpha \bigg(\sum_{r=1}^{K} E_r \bigg)\vec{e}_j + \beta \bigg(\sum_{r=1}^{K} E_r\bigg) \vec{e}_l \\
    &= \alpha \vec{e}_j + \beta \vec{e}_l . 
\end{align*}
Therefore, there is QFR from the vertex $v_j$ to the vertex $v_l$ at time $t$.
\end{proof}

The above lemma implies the following result. 

\begin{lem} \label{thm E_r vec e _v}
    If $G$ admits $\QFR$ from a vertex $v_j$ to a vertex $v_l$, then there exist constants $\alpha$ and $\beta \neq 0$ such that $|\alpha|^2+|\beta|^2=1$ and  $$
    E_r \vec{e}_l = \Big( -Re \Big(\frac{\alpha}{\beta}\Big) \pm \sqrt{\Big(Re \Big( \frac{\alpha}{\beta}\Big)\Big)^2 + 1} \Big ) E_r \vec{e}_j
    $$
    for all $r \in \{1, 2, \ldots, K\}$, where $K$ is the number of all distinct eigenvalues of $G$.
\end{lem}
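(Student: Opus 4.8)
The plan is to extract the stated scalar relation directly from Lemma~\ref{lemmaidem} and then pin down the coefficient using only the reality of the spectral projections together with the normalization $|\alpha|^2+|\beta|^2=1$. By Lemma~\ref{lemmaidem}, QFR from $v_j$ to $v_l$ at time $t$ supplies constants $\alpha,\beta$ with $\beta\neq0$ and $|\alpha|^2+|\beta|^2=1$ such that
\[
  \exp(it\theta_r)\,E_r\vec{e}_j=\alpha E_r\vec{e}_j+\beta E_r\vec{e}_l
\]
for every $r\in\{1,\dots,K\}$. The first observation I would record is that, because $A_G$ is a real symmetric matrix, each eigenspace $W_r$ admits a real orthonormal basis, so by~\eqref{Ej formula} every projection $E_r$ is a real matrix; hence $E_r\vec{e}_j$ and $E_r\vec{e}_l$ are real vectors. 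This reality is the hinge of the whole argument.

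First I would dispose of the degenerate case $E_r\vec{e}_j=\vec{0}$: then the displayed identity collapses to $\beta E_r\vec{e}_l=\vec{0}$, and since $\beta\neq0$ we get $E_r\vec{e}_l=\vec{0}$, so the claimed formula holds vacuously (any coefficient works). For the remaining indices $r$ with $E_r\vec{e}_j\neq\vec{0}$, I would rearrange the identity to
\[
  E_r\vec{e}_l=\frac{\exp(it\theta_r)-\alpha}{\beta}\,E_r\vec{e}_j,
\]
and set $c_r:=\frac{\exp(it\theta_r)-\alpha}{\beta}$. Since $E_r\vec{e}_l$ is a real vector equal to $c_r$ times the nonzero real vector $E_r\vec{e}_j$, the scalar $c_r$ is forced to be real. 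Substituting $E_r\vec{e}_l=c_rE_r\vec{e}_j$ back into the displayed identity and cancelling the nonzero factor $E_r\vec{e}_j$ then yields the scalar equation $\exp(it\theta_r)=\alpha+\beta c_r$.

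Finally I would exploit $|\exp(it\theta_r)|=1$. Taking moduli in $\exp(it\theta_r)=\alpha+\beta c_r$ and expanding $|\alpha+\beta c_r|^2$ (legitimate because $c_r$ is real) gives $|\alpha|^2+2c_r\,Re(\alpha\bar\beta)+|\beta|^2c_r^2=1$. Substituting $|\alpha|^2=1-|\beta|^2$, dividing by $|\beta|^2\neq0$, and using $Re(\alpha/\beta)=Re(\alpha\bar\beta)/|\beta|^2$, this simplifies to the quadratic
\[
  c_r^2+2\,Re\!\Big(\tfrac{\alpha}{\beta}\Big)c_r-1=0,
\]
whose roots are exactly $c_r=-Re(\alpha/\beta)\pm\sqrt{Re(\alpha/\beta)^2+1}$. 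Reinserting these into $E_r\vec{e}_l=c_rE_r\vec{e}_j$ gives the assertion, with the sign chosen per index $r$ accounting for the $\pm$.

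The main obstacle---really the only nonroutine point---is the reality step: a priori $c_r$ is a quotient of complex quantities, and it is the reality of the projections $E_r$ (forced by $A_G$ being real symmetric) that collapses it to a real number. This is precisely what makes the modulus computation produce a quadratic with real coefficients rather than an underdetermined complex equation; everything after that is elementary algebra with the normalization constraint.
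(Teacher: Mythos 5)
Your proof is correct and follows essentially the same route as the paper: invoke Lemma~\ref{lemmaidem}, set $x=(\exp(it\theta_r)-\alpha)/\beta$, and use $|\exp(it\theta_r)|=1$ to derive the quadratic $x^2+2\,Re(\alpha/\beta)\,x-1=0$, solved by the quadratic formula. In fact your write-up is more careful than the paper's, which silently treats $x$ as real when expanding $|x\beta+\alpha|^2$ (the reality of the projections $E_r$, coming from $A_G$ being real symmetric, is exactly the justification you supply) and does not mention the degenerate case $E_r\vec{e}_j=\vec{0}$.
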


\begin{proof}
    Suppose that there is QFR from vertex $v_j$ to vertex $v_l$ at time t. By Lemma \ref{lemmaidem}, there exist constants $\alpha$ and $\beta \neq 0$ such that $|\alpha|^2 + |\beta|^2 = 1$ and
    $$
        (\exp{(i \theta_r t)} - \alpha ) E_r \vec{e}_j = \beta E_r \vec{e}_l,
    $$
     for all $r \in \{1, 2, \ldots, K\}$.
   Since $\beta \neq 0$, we can let $x = \dfrac{\exp{(i \theta_r t)} - \alpha}{\beta}$. Then we have 
   $$
   1 = |\exp{(i \theta_r t)}|^2 = |x\beta + \alpha|^2
   $$
   and the quadratic polynomial 
    $x^2 + 2 Re(\frac{\alpha}{\beta} )x  -1 = 0$ can be derived. The proof is complete by the quadratic formula. 
\end{proof}

\smallskip

The tensor product of two graphs can be described as follows. Let $G$ and $H$ be two graphs with vertex sets $V(G)$ and $V(H)$, respectively. The vertex set of the tensor graph $G \otimes H$ is the Cartesian product $V(G) \times V(H)$ and two vertices $(a, b)$ and $(a', b')$ in $G \otimes H$ are adjacent if and only if $a$ is adjacent to $a'$ in $G$ and $b$ is adjacent to $b'$ in $H$. The adjacency matrix of $G \otimes H$ is determined by the Kronecker product of the adjacency matrices of $G$ and $H$:
\begin{equation*}
   A_G \otimes A_H = \left [
            \begin{array}{ccc}
               a_{11} A_H & \cdots & a_{1n} A_H \\
\vdots & \ddots & \vdots \\
a_{n1} A_H & \cdots & a_{nn} A_H 
            \end{array}
            \right ],
\end{equation*}
where $a_{jk}$ is the entry of $A_G$ for all $j,k \in \{1,2,\ldots,n \}$. If the eigenvalues of $G$ are $\lambda_1, \ldots, \lambda_n$ and those of $H$ are $\mu_1, \ldots, \mu_m$ (possibly with repetition), then the eigenvalues of $G \otimes H$ are the products $\lambda_i \mu_j$, for $1 \leq i \leq n$ and $1 \leq j \leq m$.

A commutative ring is said to be {\it local} if it has a unique maximal ideal. Note that if $R$ is a local ring with a unique maximal ideal $M$, then $R^\times = R \setminus M$. It is obvious that a field is a local ring with the maximal ideal $\{0\}$.

For a finite commutative ring $R$ with identity, it is well-known that $R$ can be decomposed as $R \cong R_1 \times R_2 \times \cdots \times R_n$, where each $R_i$ is a finite local ring. For the group of units $R^\times$, we have 
$R^\times \cong R_1^\times \times R_2^\times \times \cdots \times R_n^\times$.
The following proposition presents properties of $\G_R$.

\begin{prop}\label{graph}
    \cite[Proposition 2.2]{AR} Let $R$ be a finite commutative ring with identity.
    \begin{enumerate}[label=(\roman*)]
        \item $\G_R$ is a regular graph of degree $|R^ \times|$.
        \item If $R$ is a local ring with maximal ideal $M$, then $\G_R$ is a complete multipartite graph whose partite sets are the cosets of $M$ in $R$. In particular, $\G_R$ is a complete graph if and only if $R$ is a finite field.
        \item If $R \cong R_1 \times R_2 \times \cdots \times R_t$ is a product of finite local rings, then $\G_R \cong \bigotimes_{i=1}^{t}\G_{R_i}.$
    \end{enumerate}
\end{prop}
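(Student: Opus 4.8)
The plan is to treat the three parts in order, each flowing directly from the adjacency rule $a \sim b \iff a-b \in R^\times$ combined with the algebraic structure of $R$ and of its unit group.

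For part (i), I would fix a vertex $a \in R$ and count its neighbours. A vertex $b$ is adjacent to $a$ precisely when $a-b \in R^\times$, equivalently $b = a-u$ for some $u \in R^\times$. Since $u \mapsto a-u$ is a bijection from $R^\times$ onto the neighbourhood of $a$, every vertex has exactly $|R^\times|$ neighbours, so $\G_R$ is regular of that degree. I would also record the standard sanity checks that make $\G_R$ a genuine simple undirected graph: $-1 \in R^\times$ guarantees that adjacency is symmetric, and $0 \notin R^\times$ rules out loops.

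For part (ii), I would invoke the remark preceding the proposition that a finite local ring $R$ with maximal ideal $M$ satisfies $R^\times = R \setminus M$. Then $a \sim b \iff a-b \notin M \iff a+M \neq b+M$; that is, two vertices fail to be adjacent exactly when they lie in the same coset of $M$. This is precisely the assertion that $\G_R$ is the complete multipartite graph whose partite sets are the cosets of $M$. For the ``in particular'' clause, $\G_R$ is complete iff no two distinct vertices are non-adjacent, iff every coset of $M$ is a singleton, iff $M = \{0\}$, iff $R$ is a field.

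For part (iii), which carries the main content, I would use the componentwise decomposition $R^\times \cong R_1^\times \times \cdots \times R_t^\times$ recalled just above: an element of $R$ is a unit iff each of its coordinates is a unit in the corresponding $R_i$. Writing $a = (a_1,\ldots,a_t)$ and $b = (b_1,\ldots,b_t)$, I would show $a \sim b$ in $\G_R$ iff $a_i - b_i \in R_i^\times$ for every $i$, i.e.\ iff $a_i \sim b_i$ in $\G_{R_i}$ for all $i$. This conjunction over all coordinates is exactly the adjacency rule defining the tensor product $\bigotimes_{i=1}^{t}\G_{R_i}$, whose vertex set is already $R_1 \times \cdots \times R_t = R$, so the identity map on vertices is the desired graph isomorphism. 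The one point demanding care --- the main obstacle, such as it is --- is this equivalence ``a tuple is a unit iff all its coordinates are units'' and its alignment with tensor-product adjacency: the tensor product requires adjacency in \emph{every} factor simultaneously, and matching this logical conjunction to the unit condition (rather than accidentally producing a Cartesian or co-normal product) is the crux, while everything else is routine bookkeeping.
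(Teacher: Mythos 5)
Your proof is correct, and since the paper does not prove this proposition at all --- it is quoted verbatim from \cite[Proposition 2.2]{AR} --- there is no internal argument to compare against. Your three-part argument (the bijection $u \mapsto a-u$ for regularity, the identity $R^\times = R \setminus M$ for the multipartite structure, and the componentwise unit criterion matching tensor-product adjacency) is the standard proof of the cited result and is sound as written.
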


If $\theta_1, \theta_2,\ldots,\theta_K$ are the eigenvalues of a graph $G$ with multiplicities $m_1, m_2,\ldots, m_K$, respectively, then the spectrum of $G$ is expressed as 
$$
\Spec G =\left(
    \begin{array}{cccc}
    \theta_1 & \theta_2 & \ldots & \theta_K \\
    m_1 & m_2 & \ldots & m_K
    \end{array}
    \right). 
$$

\begin{prop}\label{specG_R} \cite[Proposition 2.1]{KD} 
    Let $R$ be a finite local ring with a unique maximal ideal $M$ of size $m$. Then
    \begin{equation*}
        \Spec \G_R = \left(
    \begin{array}{ccc}
    |R^\times| & -m & 0\\
   1 & \frac{|R|}{m}-1 & \frac{|R|}{m}(m-1)
    \end{array}
    \right) .
    \end{equation*}
    In particular, if $\mathbb{F}_q$ is the finite field with $q$ elements, then
    \begin{equation*}
       \Spec \G_{\mathbb{F}_q} = \left(
    \begin{array}{cc}
    q-1 & -1 \\
   1 & q-1
    \end{array}
    \right).
    \end{equation*}
\end{prop}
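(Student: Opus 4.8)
The plan is to deduce the spectrum directly from the structural description of $\G_R$ given in Proposition \ref{graph}(ii). Since $R$ is local with maximal ideal $M$ of size $m$, that proposition identifies $\G_R$ with the complete multipartite graph whose partite sets are the cosets of $M$ in $R$. There are exactly $k := |R|/m$ such cosets, each of size $m$, and two vertices are adjacent precisely when they lie in distinct cosets; in particular $\G_R$ is regular of degree $(k-1)m = |R| - m = |R^\times|$, which is consistent with Proposition \ref{graph}(i).

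I would then write the adjacency matrix in block form. Ordering the vertices coset by coset, the block indexed by an ordered pair of cosets equals the all-ones matrix $J_m$ when the two cosets are distinct and equals the zero block when they coincide. Hence
\[
A_{\G_R} = (J_k - I_k)\otimes J_m,
\]
where $J_k - I_k$ is the adjacency matrix of the complete graph $K_k$ and $J_m$ is the $m\times m$ all-ones matrix. The eigenvalues of $J_k - I_k$ are $k-1$ with multiplicity $1$ and $-1$ with multiplicity $k-1$, while those of $J_m$ are $m$ with multiplicity $1$ and $0$ with multiplicity $m-1$. Applying the general Kronecker-product rule for eigenvalues (the linear-algebra statement underlying the tensor-product fact recorded above), the eigenvalues of $A_{\G_R}$ are the pairwise products: $(k-1)m = |R^\times|$ once; $-m$ with multiplicity $k-1 = \frac{|R|}{m}-1$; and the value $0$, arising both from $(k-1)\cdot 0$ and from $(-1)\cdot 0$.

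The only real bookkeeping is to collect the multiplicity of the eigenvalue $0$ correctly: it equals $(m-1) + (k-1)(m-1) = k(m-1) = \frac{|R|}{m}(m-1)$. A quick check confirms that $1 + \left(\frac{|R|}{m}-1\right) + \frac{|R|}{m}(m-1) = |R|$, so all eigenvalues are accounted for, giving the claimed $\Spec \G_R$. The field case then follows immediately by specialization: a finite field $\mathbb{F}_q$ is local with maximal ideal $\{0\}$, so $m = 1$, the multiplicity $\frac{|R|}{m}(m-1)$ of $0$ becomes $q\cdot 0 = 0$, and we are left with $q-1$ (once) and $-1$ (with multiplicity $q-1$). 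The one point demanding care is the application of the Kronecker rule, since $J_m$ is not itself the adjacency matrix of a simple graph; if one prefers to avoid this, the three eigenspaces can instead be exhibited explicitly — the all-ones vector for $|R^\times|$, vectors constant on each coset with total sum zero for $-m$, and vectors summing to zero on each coset for $0$ — and one verifies the eigenvalue equations and dimension count by hand.
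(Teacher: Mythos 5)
Your argument is correct: writing $A_{\G_R}=(J_k-I_k)\otimes J_m$ with $k=|R|/m$ and taking pairwise products of eigenvalues gives exactly the claimed spectrum, and your multiplicity bookkeeping for the eigenvalue $0$ and the specialization to $m=1$ for fields both check out. The paper itself offers no proof of this proposition (it is quoted from the cited reference \cite{KD}), but your route is the same one the paper implicitly relies on elsewhere: Section 3 uses precisely the decomposition $A_{\G_R}=J_m\otimes A_{K_{|R|/m}}$, which is permutation-equivalent to your block form, and lists the corresponding eigenspaces explicitly. Your closing caveat is also handled correctly: the rule that the eigenvalues of a Kronecker product are the pairwise products holds for arbitrary square matrices, not just adjacency matrices, so either that general fact or your explicit eigenvector description suffices.
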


\section{Main results} 

We divide this section into two subsections. First, we consider a unitary Cayley graph over a finite local ring. The second subsection is devoted to the graph $\G_R$ when $R$ is a product of finite local rings.  

\subsection{Over a finite local ring} 

Let $R$ be a finite local ring with a unique maximal ideal $M$ of size $m$.
By Proposition \ref{graph}(ii), the adjacency matrix of $\G_R$ can be described as follows:
\begin{equation*}
    A_{\G_R}=
            \left [
            \begin{array}{ccccc}
               0_{m}  & J_{m} & J_{m} & \cdots & J_{m} \\
               J_{m}  & 0_{m} & J_{m} & \cdots & J_{m} \\
              J_{m}  & J_{m} & 0_{m} & \cdots & J_{m} \\
              \vdots & \vdots & \vdots & \ddots & \vdots \\
              J_{m}  & J_{m} & J_{m} & \cdots & 0_{m}
            \end{array}
            \right ]  ,   
\end{equation*}
where $0_m$ (resp. $J_m$) is the zero square matrix (resp. one) of order $m$.
By Proposition \ref{specG_R},  $\G_R$ has eigenvalues $\theta_1 = |R^\times|, \theta_2 = -m$ and $\theta_3 = 0$ with multiplicities $1,\frac{|R|}{m}-1$ and $\frac{|R|}{m}(m-1)$, respectively.

Denote $\Vec{0}_k = 
\begin{bmatrix}
    0 \\
    0 \\
    \vdots \\
    0
\end{bmatrix}_{k \times 1}$ and $ \Vec{1}_k = \begin{bmatrix}
    1 \\
    1 \\
    \vdots \\
    1
\end{bmatrix}_{k \times 1}$.
Then 
\begin{enumerate}
    \item the eigenspace corresponding to $\theta_1 = |R^\times|$ is spanned by $\vec{1}_{|R|}$,
   
    \item the eigenspace corresponding to $\theta_2 = -m$ is spanned by 
    \[
    \left\{  \begin{bmatrix}
        \frac{1}{s} \vec{1}_{sm} \\
        -\vec{1}_m \\
        \vec{0}_{\big(\frac{|R|}{m} - 1 - s \big)m}
    \end{bmatrix}
    \;\middle|\; s = 1, 2, \ldots, \frac{|R|}{m}-1
    \right\},
    \]
    
    \item the eigenspace corresponding to $\theta_3 = 0$ is spanned by all columns of the following block matrix
    \begin{equation*}
    \begin{bmatrix}
        W &&&\\
        &W&&\\
        &&\ddots& \\
        &&&W
    \end{bmatrix}_{|R|\times\frac{|R|}{m}(m-1)}
\end{equation*}
where
\begin{equation*}
    W = \begin{bmatrix}
        1 & 1 & \cdots & 1 \\
        \omega & \omega^2 & \cdots & \omega^{m-1}\\
        \omega^2 & \omega^4 & \cdots & \omega^{2(m-1)}\\
        \vdots & \vdots & \ddots & \vdots \\
        \omega^{m-1} & \omega^{2(m-1)} & \cdots & \omega^{(m-1)(m-1)}
    \end{bmatrix}_{m\times(m-1)}
\end{equation*}
and $\omega=\exp(\frac{2i\pi}{m})$.
\end{enumerate}

From the direct calculation using the formula (\ref{Ej formula}), the orthogonal projections $E_j$ into the eigenspace corresponding to the eigenvalue $\theta_j$ are as follows: 
\begin{enumerate}
     \item $E_1 = \frac{1}{|R|}J_{|R|}$,
     
        \item $E_2 = \begin{bmatrix}
            \Vec{w}_1 & \Vec{w}_2&\cdots &\Vec{w}_{|R|-1} &\Vec{w}_{|R
            |}
        \end{bmatrix}$, where
            \begin{enumerate} 
            [label=(\roman*)]
                \item $\Vec{w}_s=\sum_{l=1}^{\frac{|R|}{m}-1}\frac{\Vec{u}_l}{(l+1)m}$,
                \item 
    $\Vec{w}_{km+s}=\sum_{l=k}^{\frac{|R|}{m}-1}\frac{\Vec{u}_l}{(l+1)m}-\frac{\Vec{u}_k}{(k+1)m}$, 
                \item 
                $\Vec{w}_{|R|-m+s} = \bigg( \frac{m-|R|}{|R|m} \bigg)\Vec{u}_{\frac{|R|}{m}-1}$, 
            \end{enumerate}
        
        for all $s \in \{1,2,\ldots,m\}$, $k \in \{1,2, \ldots , \frac{|R|}{m}-2\}$ and $\Vec{u}_l$ is the $l$th column of $A_2$,
        \item $E_3=\frac{1}{m} \begin{bmatrix}
            N & & &\\
              & N& &\\
              & &\ddots&\\
              & & &N 
        \end{bmatrix}_{|R| \times |R|}$, \\
        where $N = \begin{bmatrix}
            m-1&-1&-1&&-1\\
            -1&-1&-1&&m-1\\
    \vdots&\vdots&\vdots& \iddots &\vdots\\
             -1&-1&m-1&&-1\\
            -1&m-1&-1&&-1
        \end{bmatrix}_{m\times m}$.
\end{enumerate}

\medskip
A necessary condition for QFR to occur is provided in the following proposition, which relates to the concept of strongly cospectral vertices. 

\begin{prop}
\label{plus-minus}
    Let $R$ be a finite local ring. If $\QFR$ occurs in $\G_R$ from a vertex $v_j$ to a vertex $v_l$, then $E_r \vec{e}_l = \pm E_r \vec{e}_j$ for all $r \in \{1,2,3\}$.
\end{prop}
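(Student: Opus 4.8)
The plan is to reduce the entire statement to showing that a single real parameter vanishes, using Lemma~\ref{thm E_r vec e _v} as the starting point. That lemma already tells us that QFR from $v_j$ to $v_l$ produces constants $\alpha,\beta$ with $\beta\neq 0$ and $|\alpha|^2+|\beta|^2=1$ such that $E_r\vec{e}_l=c_r\,E_r\vec{e}_j$ for every $r\in\{1,2,3\}$, where $c_r=-a\pm\sqrt{a^2+1}$ with $a:=\mathrm{Re}(\alpha/\beta)$ real; equivalently, as in the proof of that lemma, each $c_r$ is a root of the monic real quadratic $x^2+2ax-1=0$. Since the two roots of this quadratic are exactly $\{+1,-1\}$ if and only if $a=0$ (the product of the roots is always $-1$, and the sum is $-2a$, so it vanishes precisely when $a=0$), the whole proposition collapses to the single claim that $a=0$.

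The key step is to extract $a=0$ from the Perron eigenprojection $E_1$. Because $\theta_1=|R^\times|$ is the simple top eigenvalue with the all-ones eigenvector, the projection computed earlier is $E_1=\frac{1}{|R|}J_{|R|}$, and therefore $E_1\vec{e}_j=\frac{1}{|R|}\vec{1}_{|R|}=E_1\vec{e}_l$ for \emph{any} pair of vertices $v_j,v_l$. As this common vector is nonzero, the relation $E_1\vec{e}_l=c_1\,E_1\vec{e}_j$ forces $c_1=1$. Substituting the known root $c_1=1$ into $x^2+2ax-1=0$ gives $1+2a-1=0$, i.e. $a=0$. With $a=0$ the quadratic becomes $x^2-1=0$, whose only roots are $\pm 1$; hence $c_r\in\{+1,-1\}$ and $E_r\vec{e}_l=\pm E_r\vec{e}_j$ for all $r\in\{1,2,3\}$, which is the assertion.

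I do not expect a genuine obstacle here: the computation is short once one observes that the Perron projection acts identically on every standard basis vector, pinning $c_1=1$ independently of $t$, $\alpha$, and $\beta$. The only point that merits care is the logical move that a single observed root value $c_1=1$ already determines $a$ (and thus both roots simultaneously); this is legitimate precisely because the quadratic is monic with the unknown parameter appearing only in its linear coefficient, so one root value fixes that coefficient. It is worth stressing that this argument is special to unitary Cayley graphs over a \emph{local} ring, where the spectrum has only three eigenvalues and $E_1$ is the rank-one all-ones projection.
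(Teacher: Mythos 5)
Your proof is correct and follows essentially the same route as the paper: both use the rank-one all-ones projection $E_1$ to force the proportionality constant to equal $1$, deduce $\mathrm{Re}(\alpha/\beta)=0$ from the quadratic of Lemma~\ref{thm E_r vec e _v}, and conclude that the remaining constants are $\pm 1$. Your write-up is somewhat more explicit than the paper's (in particular about why a single root value pins down the linear coefficient, and why the sign may vary with $r$), but the argument is the same.
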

\begin{proof}
Suppose that $\G_R$ has QFR from a vertex $v_j$ to a vertex $v_l$.
    From Lemma \ref{thm E_r vec e _v}, there exist constants $\alpha$ and $\beta \neq 0$ such that $|\alpha|^2+|\beta|^2=1$ and 
    \begin{equation*}
    E_r \vec{e}_l = \Big( -Re \Big(\frac{\alpha}{\beta}\Big) \pm \sqrt{\Big(Re \Big( \frac{\alpha}{\beta}\Big)\Big)^2 + 1} \Big) E_r \vec{e}_j
    \end{equation*} for $r = 1, 2, 3$.
    When $r = 1$, the equation implies that 
    $$
    -Re \Big(\frac{\alpha}{\beta}\Big) \pm \sqrt{\Big(Re \Big( \frac{\alpha}{\beta}\Big)\Big)^2 + 1} = 1.
    $$
    Then it can be solved that $Re \Big(\dfrac{\alpha}{\beta}\Big) = 0$. The proof is complete.
\end{proof}

To allow QFR to occur in $\G_R$, the finite local ring $R$ must satisfy the following necessary condition.

\begin{thm}\label{m12}
    Let $R$ be a finite local ring with a unique maximal ideal $M$ of size $m$. If $\QFR$ occurs in $\G_R$, then $m$ is $1$ or $2$.
\end{thm}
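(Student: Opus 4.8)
The plan is to derive the conclusion from the necessary condition in Proposition~\ref{plus-minus} applied to the eigenvalue $\theta_3 = 0$, using the explicit block form of its projection $E_3$. Concretely, suppose $\QFR$ occurs in $\G_R$ from a vertex $v_j$ to a vertex $v_l$. By Proposition~\ref{plus-minus} we have $E_3 \vec{e}_l = \pm E_3 \vec{e}_j$, and I will show that this equation is incompatible with $m \geq 3$; since $m \geq 1$ always holds, this forces $m \in \{1,2\}$.

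First I would read off the vector $E_3 \vec{e}_j$ from the description $E_3 = \tfrac{1}{m}\diag(N, \ldots, N)$ given above. Because each diagonal block of $E_3$ acts only within a single coset of $M$ (the partite sets of $\G_R$, by Proposition~\ref{graph}(ii)), the column $E_3 \vec{e}_j$ is supported entirely on the coset $C_j$ containing $v_j$: its entry at $v_j$ equals $\tfrac{m-1}{m}$, its entries at the other $m-1$ vertices of $C_j$ equal $-\tfrac{1}{m}$, and all remaining entries are $0$. The analogous statement holds for $E_3 \vec{e}_l$ with respect to the coset $C_l$ containing $v_l$.

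The argument then splits according to whether $v_j$ and $v_l$ lie in the same coset of $M$. If $C_j \neq C_l$, then for $m \geq 2$ the vectors $E_3 \vec{e}_j$ and $E_3 \vec{e}_l$ are both nonzero (their distinguished entry $\tfrac{m-1}{m}$ is nonzero) but have disjoint supports, so $E_3 \vec{e}_l = \pm E_3 \vec{e}_j$ is impossible; hence $v_j$ and $v_l$ must share a coset. When $C_j = C_l$, I would compare the equation $E_3 \vec{e}_l = \pm E_3 \vec{e}_j$ entrywise at $v_j$, at $v_l$, and at any third vertex of the common coset. The $+$ sign fails immediately because the distinguished entry sits at different coordinates; the $-$ sign forces $\tfrac{m-1}{m} = \tfrac{1}{m}$, i.e.\ $m = 2$. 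In particular, as soon as $m \geq 3$ a third vertex of the coset exists, and comparing the value $-\tfrac1m$ at that coordinate against its negative $\tfrac1m$ yields $\tfrac{2}{m} = 0$, a contradiction.

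The computation is short, so there is no serious obstacle; the only point requiring care is the bookkeeping in the same-coset case, where one must check all three coordinate types and rule out both signs separately. The existence of a ``third'' coset vertex precisely when $m \geq 3$ is what produces the contradiction, so the whole argument hinges on correctly extracting the local support of $E_3 \vec{e}_j$ from the block-diagonal form of $E_3$.
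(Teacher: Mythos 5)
Your proposal is correct and follows essentially the same route as the paper: both apply Proposition~\ref{plus-minus} with $r=3$ and compare the $j$th and $l$th columns of the block-diagonal projection $E_3$ to force $m-1 \in \{-1,0,1\}$. Your version merely spells out the same-coset/different-coset bookkeeping that the paper's one-line deduction leaves implicit.
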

\begin{proof}
    Suppose that $\QFR$ occurs in $\G_R$ from a vertex $v_j$ to a vertex $v_l$.
    By Proposition \ref{plus-minus}, we have $E_3 \vec{e}_l = \pm E_3 \vec{e}_j$. Thus, the $l$th column of $E_3$ is equal to $\pm$ the $j$th column of $E_3$, which implies that $m-1 = -1, 0$ or $1$. Since $m = |M| > 0$, we have the theorem.
\end{proof}

If $R$ is a finite field, we have a complete characterization as follows.

\begin{thm}\label{q=2}
    Let $\mathbb{F}_q$ be the finite field with $q$ elements. Then $\QFR$ occurs in  $\G_{\mathbb{F}_q}$ if and only if $q = 2$.
\end{thm}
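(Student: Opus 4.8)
The plan is to exploit the fact that a finite field $\mathbb{F}_q$ is a local ring whose maximal ideal is $M = \{0\}$, so $m = |M| = 1$. By Proposition \ref{specG_R} the graph $\G_{\mathbb{F}_q}$ then has exactly two distinct eigenvalues, $\theta_1 = q-1$ (multiplicity $1$) and $\theta_2 = -1$ (multiplicity $q-1$); the eigenvalue $\theta_3 = 0$ is absent because its multiplicity $\frac{|R|}{m}(m-1)$ vanishes when $m = 1$. Hence $K = 2$. Since $E_1 = \frac{1}{q}J_q$ is the orthogonal projection onto $\operatorname{span}\{\vec{1}_q\}$ and $E_1 + E_2 = I_q$, the second projection is completely explicit: $E_2 = I_q - \frac{1}{q}J_q$, so $E_2\vec{e}_s = \vec{e}_s - \frac{1}{q}\vec{1}_q$ for every vertex index $s$. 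All of the work reduces to this one formula.

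For the sufficiency, when $q = 2$ the graph $\G_{\mathbb{F}_2}$ is the complete graph $K_2$, which is exactly the unitary Cayley graph known to admit perfect state transfer \cite{BM}. Since PST is the special case $\alpha = 0$ of QFR (with $\beta \neq 0$), this already yields QFR in $\G_{\mathbb{F}_2}$. Alternatively one checks this directly: for $K_2$ each row of the $2\times 2$ unitary matrix $H(t)$ has unit length, so $|H(t)_{1,1}|^2 + |H(t)_{1,2}|^2 = 1$ automatically, and $H(t)_{1,2} = i\sin t \neq 0$ for suitable $t$, giving the nonzero $\beta$ required.

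For the necessity, suppose QFR occurs from a vertex $v_j$ to a distinct vertex $v_l$ (distinctness being the standing convention for QFR). By Proposition \ref{plus-minus} we have $E_2\vec{e}_l = \pm E_2\vec{e}_j$, that is $\vec{e}_l - \frac{1}{q}\vec{1}_q = \pm\bigl(\vec{e}_j - \frac{1}{q}\vec{1}_q\bigr)$. The $+$ sign forces $\vec{e}_l = \vec{e}_j$, i.e.\ $l = j$, contradicting $j \neq l$; hence the sign must be $-$, which rearranges to $\vec{e}_j + \vec{e}_l = \frac{2}{q}\vec{1}_q$. Reading off coordinates, the left-hand vector equals $1$ in the positions $j, l$ and $0$ in every other position, so each entry of $\frac{2}{q}\vec{1}_q$ must be $0$ or $1$; as $\frac{2}{q} \neq 0$, there can be no vertex outside $\{j,l\}$, forcing $q = 2$ (and then $\frac{2}{q} = 1$ is consistent). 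This gives the claimed characterization.

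The spectral computation of $E_1, E_2$ and the final coordinate comparison are routine; the one point demanding care is the sign dichotomy coming from Proposition \ref{plus-minus}, where the $+$ branch must be discarded using $j \neq l$ before the $-$ branch can be pushed to the conclusion $q = 2$. Beyond this bookkeeping I anticipate no real obstacle, precisely because the two-eigenvalue structure of a complete graph makes both projections fully explicit.
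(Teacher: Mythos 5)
Your proof is correct and follows essentially the same route as the paper: sufficiency by direct computation of $H(t)$ for $K_2$, and necessity by showing $E_2\vec{e}_l \neq \pm E_2\vec{e}_j$ for $q \ge 3$ and invoking Proposition \ref{plus-minus}. The only difference is that where the paper defers the necessity argument to the proof of Theorem 2.3 of \cite{MY}, you carry it out explicitly using the clean form $E_2 = I_q - \frac{1}{q}J_q$ of the projection for the complete graph, which makes the sign dichotomy and the coordinate comparison self-contained.
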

\begin{proof}
    Assume that $q = 2$. Then $A_{\G_R}= \left [
            \begin{array}{cc}
               0  & 1  \\
               1  & 0
            \end{array}
            \right ]$ 
            and 
            $$
            H(t) = \exp{(itA_{G_R})}
            = \begin{bmatrix}
                \cos{t} & i \sin{t} \\
                i \sin{t} & \cos{t}
            \end{bmatrix}
            $$ 
            for all $t \ge 0$. Note that $|\cos{t}|^2 + |i \sin{t}|^2 = 1$ for all $t \ge 0$. Thus, $\G_{\mathbb{F}_q}$ admits QFR. 
            
            To prove the other direction, we assume that $q \ge 3$. Similarly to the proof of \cite[Theorem 2.3]{MY}, we can use the above matrix $E_2$ to argue that $E_2 \vec{e}_l \neq \pm E_2 \vec{e}_j$ for all $1 \le j < l \leq q$, which contradicts Proposition \ref{plus-minus}.
\end{proof}

For the case of a finite local ring, we obtain the following characterization.

\begin{thm} \label{mainlocal}
     Let $R$ be a finite local ring with a unique maximal ideal $M$. Then $\G_R$ has $\QFR$ if and only if $R$ is $\mathbb{F}_2, \mathbb{Z}_4$ or $\mathbb{Z}_2[x]/(x^2)$.
\end{thm}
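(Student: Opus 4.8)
\textbf{Proof proposal for Theorem \ref{mainlocal}.}

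The plan is to combine the necessary conditions already established with a finite case analysis of small local rings. By Theorem \ref{m12}, if $\G_R$ has QFR then the maximal ideal $M$ has size $m=1$ or $m=2$. These two cases split the argument cleanly, so first I would dispose of $m=1$ and then handle $m=2$.

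When $m=1$, the maximal ideal is $\{0\}$, so $R$ is a finite field $\mathbb{F}_q$. By Theorem \ref{q=2}, QFR occurs in $\G_{\mathbb{F}_q}$ if and only if $q=2$, which yields exactly the ring $\mathbb{F}_2$. This case therefore requires no new work beyond citing the finite-field characterization.

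The substantive case is $m=2$. Here $|M|=2$, and it is a standard fact about finite local rings (with $m$ a power of the residue characteristic) that a finite local ring whose maximal ideal has exactly two elements must have order $4$; indeed $M\cong\mathbb{F}_2$ as a module over the residue field, forcing $|R|=|M|\cdot|R/M|$ with $|R/M|=2$, so $|R|=4$. First I would enumerate the commutative local rings of order $4$: these are the field $\mathbb{F}_4$, and the two non-field local rings $\mathbb{Z}_4$ and $\mathbb{Z}_2[x]/(x^2)$. The field $\mathbb{F}_4$ is excluded because it has $m=1$, not $m=2$ (its maximal ideal is $\{0\}$), so it is governed by Theorem \ref{q=2} and fails to admit QFR since $q=4\neq 2$. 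This leaves precisely $\mathbb{Z}_4$ and $\mathbb{Z}_2[x]/(x^2)$ as the candidate rings with $m=2$.

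It then remains to verify the forward direction, namely that $\G_R$ genuinely admits QFR for $R=\mathbb{Z}_4$ and $R=\mathbb{Z}_2[x]/(x^2)$. For both rings, Proposition \ref{graph}(ii) shows that $\G_R$ is the complete multipartite graph with two partite classes of size $2$, i.e.\ $K_{2,2}\cong C_4$; equivalently one computes $\Spec\G_R$ from Proposition \ref{specG_R} with $m=2$ and $|R|=4$, giving eigenvalues $2,-2,0$ with multiplicities $1,1,2$. Using the explicit projections $E_1,E_2,E_3$ displayed above for $m=2$ (or directly the known fact that $C_4$ admits PST, hence QFR, between antipodal vertices), I would apply Lemma \ref{lemmaidem} to exhibit the required $\alpha,\beta$ and a revival time $t$ making $\exp(it\theta_r)E_r\vec{e}_j=\alpha E_r\vec{e}_j+\beta E_r\vec{e}_l$ hold for $r=1,2,3$; concretely, taking $v_j,v_l$ in a common partite class (the two elements of a coset of $M$) makes $E_2\vec{e}_j=E_2\vec{e}_l$ and $E_3\vec{e}_j=E_3\vec{e}_l$, reducing the conditions to a single phase equation on the $\theta_1,\theta_3$ eigenvalues that is satisfied at a suitable $t$. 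The main obstacle I anticipate is not any single hard computation but rather being careful to pair the right source and target vertices (same coset versus different coset) so that Proposition \ref{plus-minus}'s sign condition is met for all three $r$ simultaneously; once the vertex pair is chosen correctly, the verification via Lemma \ref{lemmaidem} is routine.
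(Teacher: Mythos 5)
Your proposal is correct and follows the same skeleton as the paper's proof: Theorem \ref{m12} reduces to $m\in\{1,2\}$, the case $m=1$ is dispatched by Theorem \ref{q=2}, the case $m=2$ is reduced to $\mathbb{Z}_4$ and $\mathbb{Z}_2[x]/(x^2)$, and the converse is a PST-implies-QFR observation. The two places where you diverge are both substitutions of direct arguments for the paper's citations, and both are sound. For the classification of local rings with $|M|=2$, the paper simply cites Ganesan's result \cite{GN}, whereas you derive $|R|=4$ from $M^2=0$ (via Nakayama) and the $R/M$-vector-space structure of $M$, then enumerate the local rings of order $4$; your version is self-contained but relies on the (standard, unproved) enumeration. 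For sufficiency, the paper cites \cite[Theorem 2.4]{MY} for PST on all three rings, whereas you verify QFR on $C_4$ directly through Lemma \ref{lemmaidem}; this buys independence from the external PST result at the cost of a small computation. One minor slip: for two vertices $v_j,v_l$ in the same coset of $M$ with $m=2$, the block $N=\left[\begin{smallmatrix}1&-1\\-1&1\end{smallmatrix}\right]$ gives $E_3\vec{e}_l=-E_3\vec{e}_j$, not $E_3\vec{e}_l=E_3\vec{e}_j$ as you wrote; the sign is still $\pm$ so Proposition \ref{plus-minus} and the phase equation at $t=\pi/2$ (with $\alpha=0$, $\beta=-1$) go through unchanged, but you should correct the sign when writing the verification out in full.
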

\begin{proof}
   Assume that $\G_R$ has $\QFR$. By Theorem \ref{m12}, $|M| = 1$ or $2$. If $|M| = 1$, then $R$ is a finite field and Theorem \ref{q=2} forces $R = \mathbb{F}_2$. If $|M| = 2$, it follows from \cite{GN} that $R$ is either $\mathbb{Z}_4$ or $\mathbb{Z}_{2} [ x ] /(x^2)$.

    Suppose that $R = \mathbb{F}_2, \mathbb{Z}_4$ or $\mathbb{Z}_{2} [ x ] /(x^2)$. Then by \cite[Theorem 2.4]{MY}, $\G_R$ admits PST, so it has QFR. 
\end{proof}

We provide examples to support our above results.

\begin{ex}
   Consider the following unitary Cayley graphs over $\mathbb{Z}_n$, where $n = 2, 3$ and $4$. 
\begin{enumerate}
    \item Note that $\G_{\mathbb{Z}_2}$ is the path graph $P_2$. 
   
    \vspace{0.5cm}
    \begin{center}
            \begin{tikzpicture}[scale=1, every node/.style={circle, draw, fill=black, inner sep=2pt}]
  \node[label=below:$v_1$] (v1) at (0,0) {};
  \node[label=below:$v_2$] (v2) at (2,0) {};
  \draw (v1) -- (v2);
\end{tikzpicture}
    \end{center}
    It has $\QFR$ between the two vertices, as shown in Theorem \ref{q=2}.

    \item The graph $\G_{\mathbb{Z}_3}$ is the complete graph $K_3$.

    \vspace{0.5cm}
     \begin{center}
 \begin{tikzpicture}[scale=1, every node/.style={circle, draw, fill=black, inner sep=1.8pt}]
  \node (v0) at (90:1)  [label=above:$v_1$] {};
  \node (v1) at (210:1) [label=below left:$v_2$] {};
  \node (v2) at (330:1) [label=below right:$v_3$] {};

  \draw (v0) -- (v1);
  \draw (v1) -- (v2);
  \draw (v2) -- (v0);
\end{tikzpicture}
     \end{center}
     Consider the orthogonal projection
$E_2$ into the eigenspace corresponding to the eigenvalue $\theta_2 = -1$:
\[
E_2 = \frac{1}{3}
\begin{bmatrix}
    2 & -1 & -1 \\
    -1 & 2 & -1 \\
    -1 & -1 & 2
\end{bmatrix}.
\]
Then $E_2 \vec{e}_l \neq \pm E_2 \vec{e}_j$ for all $j, l \in \{1, 2, 3\}$. By Proposition \ref{plus-minus}, $\QFR$ does not occur between two vertices.

\item The graph $\G_{\mathbb{Z}_4}$ is a $4$-cycle $C_4$.

\vspace{0.5cm}
     \begin{center}
\begin{tikzpicture}[scale=1, every node/.style={circle, draw, fill=black, inner sep=1.8pt}]
  \node (v0) at (0,1)   [label=above:$v_1$] {};
  \node (v1) at (1,0)   [label=right:$v_2$] {};
  \node (v2) at (0,-1)  [label=below:$v_3$] {};
  \node (v3) at (-1,0)  [label=left:$v_4$] {};

  \draw (v0) -- (v1);
  \draw (v1) -- (v2);
  \draw (v2) -- (v3);
  \draw (v3) -- (v0);
\end{tikzpicture}
\end{center}
Its adjacency matrix is given by
\[
A_{\G_{\mathbb{Z}_4}} = \begin{bmatrix}
            0 & 1 & 0 & 1 \\
               1 & 0 & 1 & 0 \\
               0 & 1 & 0 & 1 \\
               1 & 0 & 1 & 0  
    \end{bmatrix}
\]
and the transition matrix is equal to 
\begin{align*}
          \begin{bmatrix}
\cos^2(t) & i\sin(t)\cos(t) & -\sin^2(t) & i\sin(t)\cos(t) \\
i\sin(t)\cos(t) & \cos^2(t) & i\sin(t)\cos(t) & -\sin^2(t) \\
-\sin^2(t) & i\sin(t)\cos(t) & \cos^2(t) & i\sin(t)\cos(t) \\
i\sin(t)\cos(t) & -\sin^2(t) & i\sin(t)\cos(t) & \cos^2(t)
\end{bmatrix}.
    \end{align*}
Recall that the periodic cases are ruled out. Note that there is no time $t$ satisfying 
\[
|\cos^2(t)|^2 + |i \sin(t) \cos (t)|^2 = 1 \text{ and } |\cos^2(t)|^2 \neq 1.
\]
However, the conditions 
\[
|\cos^2(t)|^2 + |- \sin^2(t)|^2 = 1 \text{ and } |\cos^2(t)|^2 \neq 1 
\]
imply $t = \frac{\pi}{2} + k \pi$, where $k \in \mathbb{Z}$. Therefore, $\QFR$, in fact PST, only occurs at these times $t$ between vertices $v_1$ and $v_3$.
\end{enumerate}

\end{ex}

\subsection{Over a finite commutative ring}

The results of PST concerning unitary Cayley graphs over finite commutative rings directly lead to the following theorem.

\begin{thm}
    Let $\mathbb{F}_{2^{r_1}}, \ldots, \mathbb{F}_{2^{r_s}}$ be the finite fields with $2^{r_1},\ldots,2^{r_s}$ elements, respectively. For $R = \mathbb{F}_2, \mathbb{Z}_4$ or $\mathbb{Z}_2[x] / (x^2)$, the graph $\G_R \otimes \G_{\mathbb{F}_{2^{r_1}}} \otimes 
    \cdots 
    \otimes \G_{\mathbb{F}_{2^{r_s}}}$ has $\QFR$. 
\end{thm}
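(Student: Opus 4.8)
The plan is to exploit the tensor-product structure of the graph together with the fact that each factor already admits perfect state transfer (PST), which is the special case $\alpha=0$ of QFR. By Theorem~\ref{mainlocal}, for $R=\F_2,\Z_4$ or $\Z_2[x]/(x^2)$ the graph $\G_R$ admits PST, say from a vertex $a$ to a vertex $a'$ at some time $t_0$. Likewise, by Theorem~\ref{q=2} each $\G_{\F_{2^{r_i}}}$ with $q=2^{r_i}$ fails to have QFR in general when $r_i>1$, so I cannot hope for PST in those factors; instead I will show they are \emph{periodic} at every vertex at the relevant time. The key idea is that if every factor is periodic at a chosen vertex at a common time $t$, while the first factor $\G_R$ performs genuine PST at that same time, then the tensor product performs QFR (in fact PST) from the product vertex to the product vertex whose first coordinate has moved.

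First I would recall that the transition matrix of a tensor product factorizes: since $A_{\G_R\otimes H}=A_{\G_R}\otimes A_H$ for the relevant Kronecker structure, one checks by the power-series definition that $H_{\G_R\otimes H}(t)=H_{\G_R}(t)\otimes H_H(t)$ whenever the factors commute appropriately; here the two factors act on disjoint tensor legs, so this holds cleanly. Thus on a basis vector $\vec e_a\otimes \vec e_b$ one has $H(t)(\vec e_a\otimes \vec e_b)=\big(H_{\G_R}(t)\vec e_a\big)\otimes\big(H_H(t)\vec e_b\big)$, where $H=\bigotimes_i \G_{\F_{2^{r_i}}}$. Second, I would locate a time $t$ at which $\G_R$ has PST from $a$ to $a'$ \emph{and} simultaneously each field factor $\G_{\F_{2^{r_i}}}$ is periodic at the chosen vertex $b_i$. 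Because PST and periodicity times form arithmetic-progression-like sets governed by the integrality/ratios of eigenvalues (the spectra in Proposition~\ref{specG_R} are all integers here, e.g. $\Spec\G_{\F_q}$ has eigenvalues $q-1$ and $-1$), a common time exists: the periodicity of each $\G_{\F_{2^{r_i}}}$ follows from its integer spectrum, and one takes a common multiple of the minimal PST time of $\G_R$ and the periods of the field factors.

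The third step is to combine these. At the common time $t$, write $H_{\G_R}(t)\vec e_a=\beta\,\vec e_{a'}$ with $|\beta|=1$ (pure PST, $\alpha=0$) and $H_H(t)\vec e_b=\gamma\,\vec e_b$ with $|\gamma|=1$ (periodicity at $b=(b_1,\dots,b_s)$). Then the factorized transition matrix gives $H(t)(\vec e_a\otimes\vec e_b)=\beta\gamma\,(\vec e_{a'}\otimes\vec e_b)$, a unit-modulus multiple of a single basis vector, which is PST and hence QFR from $(a,b)$ to $(a',b)$. This verifies the defining condition $|H(t)_{(a,b),(a,b)}|^2+|H(t)_{(a,b),(a',b)}|^2=1$, with the first term $0$ and the second $1$.

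The main obstacle is the simultaneity in the second step: I must guarantee a single time $t$ at which \emph{all} factors cooperate, i.e. $\G_R$ transfers while every field factor returns to a scalar multiple of its starting state. This reduces to showing the field factors $\G_{\F_{2^{r_i}}}$ are periodic, which I would argue from the integrality of their eigenvalues $\{2^{r_i}-1,-1\}$: the eigenvalue gaps are integers, so $H_H(t)$ is (up to a global phase) the identity whenever $t$ is an integer multiple of $\pi$ times a suitable constant, and then I take the least common period alongside the PST time of $\G_R$. Care is needed because periodicity at a vertex is weaker than periodicity of the whole graph, but for these highly symmetric (vertex-transitive) graphs the two coincide, so the argument closes. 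Once periodicity of the field factors is established, the tensor factorization does the rest and the conclusion follows immediately.
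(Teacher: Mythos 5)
Your argument breaks at its central step: the factorization $H_{\G_R\otimes H}(t)=H_{\G_R}(t)\otimes H_{H}(t)$ is false for the tensor product. Here $A_{\G_R\otimes H}=A_{\G_R}\otimes A_{H}$, so $(A_{\G_R}\otimes A_{H})^n=A_{\G_R}^n\otimes A_{H}^n$ and the exponential series gives $\sum_{n}\tfrac{(it)^n}{n!}A_{\G_R}^n\otimes A_H^n$, which is not the Kronecker product of the two separate exponential series (the cross terms $A_{\G_R}^n\otimes A_H^m$ with $n\neq m$ are missing). Spectrally: if $A_{\G_R}=\sum_j\lambda_jP_j$ and $A_H=\sum_k\mu_kQ_k$, then $H_{\G_R\otimes H}(t)=\sum_{j,k}e^{it\lambda_j\mu_k}\,P_j\otimes Q_k$, whereas your claimed right-hand side equals $\sum_{j,k}e^{it(\lambda_j+\mu_k)}\,P_j\otimes Q_k$; the eigenvalues multiply, they do not add. (Your identity is the one for the Cartesian product, where the adjacency matrices add.) The issue is not commutativity of the tensor legs: $e^{itXY}\neq e^{itX}e^{itY}$ even for commuting $X,Y$. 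Consequently your third step, which multiplies a PST phase from $\G_R$ by a periodicity phase from the field factors at a common time, has no basis. The correct identity, quoted in the paper from Coutinho--Godsil, is $H_{X\times Y}(t)=\sum_r E_r\otimes H_Y(\theta_r t)$, in which the time in the second factor is rescaled by each eigenvalue $\theta_r$ of the first; one must then control all the matrices $H_Y(\theta_r t)$ simultaneously, and this is precisely where your ``common period'' argument would have to be rebuilt from scratch.

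For comparison, the paper's proof is a one-line citation: by Theorem 2.5 of Thongsomnuk--Meemark, each of the listed product graphs is already known to admit PST, and PST is the $\alpha=0$ case of QFR. If you want a self-contained argument in the spirit of yours, you would need to work with $\sum_r E_r\otimes H_Y(\theta_r t)$ and exploit that all the eigenvalues involved are integers satisfying the right congruences modulo powers of $2$, as is done in the PST literature; the periodicity of $\G_{\F_{2^{r_i}}}=K_{2^{r_i}}$ that you correctly note is an ingredient of that argument, but it does not combine with the PST of $\G_R$ in the simple multiplicative way you propose.
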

\begin{proof}
   By \cite[Theorem 2.5]{TI}, all these graphs have PST, so they have QFR.
\end{proof}

The characterization of finite commutative rings allowing PST to occur on their unitary Cayley graphs has been completed in \cite[Theorem 2.5]{TI}. The authors showed that a finite commutative ring $R$ does not contain an odd characteristic local ring as its component if $\G_R$ exhibits PST. This implies that $\G_{\mathbb{Z}_6}$ does not possess PST. However, the following example shows that QFR can occur in this graph.

\begin{ex}
The graph $\G_{\mathbb{Z}_6}$ is a $6$-cycle $C_6$ (a hexagon).

    \vspace{0.5 cm}
    \begin{center}
       \begin{tikzpicture}[scale=1, every node/.style={circle, draw, fill=black, inner sep=1.8pt}]
  \foreach \i in {1,...,6} {
    \node (v\i) at (60*\i:1) [label=60*\i:$v_{\i}$] {};
  }

  \foreach \i [evaluate=\i as \j using {int(mod(\i,6)+1)}] in {1,...,6} {
    \draw (v\i) -- (v\j);
  }
\end{tikzpicture}
    \end{center}
     Its adjacency matrix is given by
    $$
    A_{\G_{\mathbb{Z}_6}} =
    \begin{bmatrix}
       0 & 1 & 0 & 0 & 0 & 1 \\
               1 & 0 & 1 & 0 & 0 & 0 \\
               0 & 1 & 0 & 1 & 0 & 0 \\
               0 & 0 & 1 & 0 & 1 & 0 \\
               0 & 0 & 0 & 1 & 0 & 1 \\
               1 & 0 & 0 & 0 & 1 & 0  
    \end{bmatrix},
            $$ 
            and there are four distinct eigenvalues, $\theta_1 = -2, \theta_2 = -1, \theta_3 = 1$ and $\theta_4=2$ with multiplicities $1, 2, 2$ and $1$, respectively. 
             The eigenspaces are spanned, respectively, by the columns of the following matrices: 
            \begin{align*}
                A_1 &= \left [
            \begin{array}{c}
               1   \\
               -1  \\
               1 \\
               -1 \\
               1\\
               -1
            \end{array}
            \right ] , \hspace{4mm} A_2 = \left [
            \begin{array}{cc}
               1 & 1   \\
               \omega^2 & \omega^4 \\
               \omega^4 & \omega^2 \\
               1  & 1\\
               \omega^2 & \omega^4 \\
               \omega^4 & \omega^2
            \end{array}
            \right ], \\ 
            A_3 &= \left [
            \begin{array}{cc}
               1 & 1   \\
               \omega & \omega^5 \\
               \omega^2 & \omega^4 \\
               -1  & -1\\
               \omega^4 & \omega^2 \\
               \omega^5 & \omega
            \end{array}
            \right ] , \hspace{2mm} \text{and} 
            \hspace{2mm} A_4 = \left [
            \begin{array}{c}
               1   \\
               1  \\
               1 \\
               1 \\
               1\\
               1
            \end{array}
            \right ],
            \end{align*} where $\omega= \exp(\frac{i\pi}{3})$.
   Moreover, we obtain the orthogonal projection $E_j$ on the eigenspace belonging to $\theta_j$ for each $j \in \{1,2,3,4 \}$, as follows:
   \begingroup
\allowdisplaybreaks
   \begin{align*}
       E_1 &= \frac{1}{6}\left [
            \begin{array}{cccccc}
               1  & -1 & 1 & -1 & 1 & -1  \\
               -1  & 1 & -1 & 1 & -1 & 1 \\
               1  & -1 & 1 & -1 & 1 & -1  \\
               -1  & 1 & -1 & 1 & -1 & 1 \\
               1  & -1 & 1 & -1 & 1 & -1  \\
               -1  & 1 & -1 & 1 & -1 & 1
            \end{array}
            \right ], \\  
            E_2 &= \frac{1}{6}\left [
            \begin{array}{cccccc}
               2  & -1 & -1 & 2 & -1 & -1  \\
               -1  & -1 & 2 & -1 & -1 & 2 \\
               -1  & 2 & -1 & -1 & 2 & -1  \\
               2  & -1 & -1 & 2 & -1 & -1 \\
               -1  & -1 & 2 & -1 & -1 & 2  \\
               -1  & 2 & -1 & -1 & 2 & -1
            \end{array}
            \right ], \\ 
            E_3 &= \frac{1}{6}\left [
            \begin{array}{cccccc}
               2  & 1 & -1 & -2 & -1 & 1  \\
               1  & -1 & -2 & -1 & 1 & 2 \\
               -1  & -2 & -1 & 1 & 2 & 1  \\
               -2  & -1 & 1 & 2 & 1 & -1 \\
               -1  & 1 & 2 & 1 & -1 & -2  \\
               1  & 2 & 1 & -1 & -2 & -1
            \end{array}
            \right ],  
            \text{ and } \\ \hspace{2mm} 
            E_4 &= \frac{1}{6}\left [
            \begin{array}{cccccc}
               1  & 1 & 1 & 1 & 1 & 1  \\
               1  & 1 & 1 & 1 & 1 & 1 \\
               1  & 1 & 1 & 1 & 1 & 1 \\
               1  & 1 & 1 & 1 & 1 & 1 \\
               1  & 1 & 1 & 1 & 1 & 1 \\
               1  & 1 & 1 & 1 & 1 & 1
            \end{array}
            \right ]. 
   \end{align*} 
   \endgroup
   By Lemma \ref{lemmaidem} and choosing $\alpha = -\frac{1}{2}$ and $\beta = -i\frac{\sqrt{3}}{2}$, QFR occurs in $\G_{\mathbb{Z}_6}$ from $v_1$ to $v_4$ at time $t=\frac{2\pi}{3}$.
\end{ex}

The above example highlights a difference between PST and QFR, motivating further study.

\medskip
Let $R$ be a finite commutative ring with identity. Then 
\[
R \cong R_1 \times R_2 \times \ldots \times R_n,
\]
where $R_j$ is a finite local ring with a unique maximal ideal $M_j$ of size $m_j$ for all $j \in \{1,2\ldots,n\}$. Set $m = m_1 m_2 \ldots m_n$. Since $R_j$ is a finite local ring, the graph $\G_{R_j}$ is a complete multipartite graph whose partite sets are the cosets of $M_j$ in $R_j$.
Let $K_t$ be the complete graph on $t$ vertices. Then the adjacency matrix of $\G_{R_j}$ can be expressed as $A_{\G_{R_j}} = J_{m_j} \otimes A_{K_{\frac{|R_j|}{m_j}}}$ for all $j \in \{1,2,\ldots,n\}$ which implies 
\begin{align*}
       A_{\G_{R}} &= \bigotimes_{j=1}^{n}A_{\G_{R_j}}= \bigotimes_{j=1}^{n}\bigg( J_{m_{j}} \otimes A_{K_{\frac{|R_i|}{m_j}}} \bigg) \\
    &= \bigotimes_{j=1}^{n}J_{m_j} \otimes \bigotimes_{j=1}^{n}A_{K_{\frac{|R_j|}{m_j}}} = J_m \otimes A_G,
\end{align*}
where $G$ is the tensor graph $K_{\frac{|R_1|}{m_1}} \otimes K_{\frac{|R_2|}{m_2}} \otimes \cdots \otimes K_{\frac{|R_n|}{m_n}}$.

For each $j \in \{1,2,\ldots,n\}$, let $m_j' = \frac{|R_j|}{m_j}$ and consider the orthogonal projection on the eigenspace corresponding into the eigenvalue $m_j' - 1$ for the graph $K_{m_j'}$ which is given by 
$$
\frac{1}{m_j'} J_{m_j'} =
\frac{1}{m_j'}\begin{bmatrix}
        1 & 1& \cdots &1\\
        1 & 1& \cdots &1 \\
\vdots&\vdots&\ddots&\vdots\\
        1 & 1& \cdots &1
    \end{bmatrix}.
$$

Focusing on $J_m$, its eigenvalues are $m$ of multiplicity one and $0$ of multiplicity $m - 1$. Let $E_1$ be the orthogonal projection into the eigenspace corresponding to $m$ and $E_2$ be the orthogonal projection into the eigenspace corresponding to $0$. It follows that
\begin{equation*}
    E_1 = \frac{1}{m}\begin{bmatrix}
        1 & 1& \cdots &1\\
        1 & 1& \cdots &1 \\
        \vdots&\vdots&\ddots&\vdots\\
        1 & 1& \cdots &1
    \end{bmatrix} \hspace{2mm} \text{and} \hspace{2mm} E_2 = \frac{1}{m}\begin{bmatrix}
        m-1 & -1  & \cdots & -1\\
        -1 & m-1 & \cdots & -1\\
        \vdots & \vdots & \ddots &\vdots\\
        -1 & -1 & \cdots& m-1
    \end{bmatrix}.
\end{equation*}
Using the properties of the tensor product, the adjacency matrix $A_{\G_R}$ admits an orthogonal projection of the form 
\[
P_1 = E_1 \otimes \bigotimes_{j = 1}^n \frac{1}{m_j'} J_{m_j'} =
\frac{1}{|R|}J_{|R|} = 
\frac{1}{|R|}\begin{bmatrix}
        1 & 1& \cdots &1\\
        1 & 1& \cdots &1 \\
\vdots&\vdots&\ddots&\vdots\\
        1 & 1& \cdots &1
    \end{bmatrix}, 
\]
which is for the eigenvalue $m \cdot \prod_{j = 1} ^n (m_j' - 1)$.

\begin{prop}
 \label{plus-minus commu}
    Let $R$ be a finite commutative ring with identity. If $\QFR$ occurs in $\G_R$ from a vertex $v_j$ to a vertex $v_l$, then $P_r \vec{e}_l = \pm P_r \vec{e}_j$ for every orthogonal projection $P_r$ into the eigenspace corresponding to $\theta_r$ of $\G_R$.
\end{prop}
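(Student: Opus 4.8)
The plan is to mirror the proof of Proposition~\ref{plus-minus}, with the local-ring projection $E_1$ replaced by the projection $P_1=\frac{1}{|R|}J_{|R|}$ constructed just above. The point is that Lemma~\ref{thm E_r vec e _v} holds for an arbitrary graph, so it applies to $\G_R$ for any finite commutative ring $R$: if $\G_R$ has $\QFR$ from $v_j$ to $v_l$, there are constants $\alpha$ and $\beta\neq 0$ with $|\alpha|^2+|\beta|^2=1$ such that, for every spectral projection $P_r$ of $A_{\G_R}$,
\[
P_r\vec{e}_l=\Big(-Re\Big(\tfrac{\alpha}{\beta}\Big)\pm\sqrt{Re\Big(\tfrac{\alpha}{\beta}\Big)^2+1}\Big)P_r\vec{e}_j.
\]
Since the product of the two roots of $x^2+2Re(\alpha/\beta)x-1=0$ equals $-1$, these two possible coefficients are $+1$ and $-1$ precisely when $Re(\alpha/\beta)=0$; thus the whole statement reduces to establishing this single scalar condition.

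To force $Re(\alpha/\beta)=0$, I would specialize the displayed identity to $P_1=\frac{1}{|R|}J_{|R|}$, the projection attached to the degree eigenvalue $m\prod_{j=1}^n(m_j'-1)=|R^\times|$. Every column of $J_{|R|}$ is the all-ones vector, so $P_1\vec{e}_j=P_1\vec{e}_l=\frac{1}{|R|}\vec{1}_{|R|}\neq\vec{0}$. Feeding this into the identity forces the coefficient to be $1$, i.e.
\[
-Re\Big(\tfrac{\alpha}{\beta}\Big)\pm\sqrt{Re\Big(\tfrac{\alpha}{\beta}\Big)^2+1}=1,
\]
and squaring, then discarding the extraneous sign exactly as in Proposition~\ref{plus-minus}, gives $Re(\alpha/\beta)=0$. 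Substituting back makes the coefficient $\pm1$ for every $r$, which is the assertion $P_r\vec{e}_l=\pm P_r\vec{e}_j$.

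The step needing the most care is confirming that $P_1=\frac{1}{|R|}J_{|R|}$ really is the orthogonal projection onto a full eigenspace, as Lemma~\ref{thm E_r vec e _v} requires; this is the same as saying the degree eigenvalue $|R^\times|$ is simple, i.e. that $\G_R$ is connected, which can fail (for instance $\G_{\F_2\times\F_2}$ is disconnected). I would settle this by observing that $\QFR$ from $v_j$ to $v_l$ requires $\beta=H(t)_{j,l}\neq0$, so $v_j$ and $v_l$ must lie in the same connected component. The eigenspace of the degree eigenvalue is spanned by the normalized indicator vectors of the components, so the full projection onto it sends both $\vec{e}_j$ and $\vec{e}_l$ to the same nonzero multiple of their common component's indicator; the previous paragraph then goes through verbatim with $P_1$ replaced by this projection, and $Re(\alpha/\beta)=0$ follows in every case.
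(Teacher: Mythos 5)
Your proof is correct and takes essentially the same route as the paper, whose own proof is just the one-line instruction to repeat the argument of Proposition~\ref{plus-minus} with $P_1=\frac{1}{|R|}J_{|R|}$ in place of $E_1$ to force $Re\big(\frac{\alpha}{\beta}\big)=0$. Your additional care about the disconnected case (where $|R^\times|$ is not a simple eigenvalue, e.g.\ $\G_{\F_2\times\F_2}$) addresses a point the paper's terse proof passes over, and your resolution --- QFR forces $v_j$ and $v_l$ into the same component, so the full projection onto the degree eigenspace still sends $\vec{e}_j$ and $\vec{e}_l$ to the same nonzero vector --- is sound.
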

\begin{proof}
We can argue using similar arguments from the proof of Proposition \ref{plus-minus} and apply $P_1$ to show that the constant $Re\big( \frac{\alpha}{\beta}\big)$ is $0$.
\end{proof}

Note that $A_{\G_R}$ also has an orthogonal projection into the eigenspace corresponding to the eigenvalue $0$, given by
\[
P_2 = E_2 \otimes I_{m_1'} \otimes \cdots \otimes I_{m_n'} = E_2 \otimes \sum E_{m'_1} \otimes \cdots \otimes \sum E_{m'_n},
\]
where $I_{m_j'}$ is the identity matrix of size $m_j'$ and $\sum E_{m'_j}$ is the sum of all orthogonal projections of $K_{m'_j}$.

\begin{thm}\label{m=1,2}
    Let $R$ be a finite commutative ring with identity. Under the above notations, if $\QFR$ occurs in $\G_R$, then $m = 1$ or $2$. 
\end{thm}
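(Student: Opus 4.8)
The plan is to test the necessary condition of Proposition \ref{plus-minus commu} against the single projection $P_2 = E_2 \otimes I_{m_1'} \otimes \cdots \otimes I_{m_n'}$. First I would observe that $P_2$ is not merely some projection but in fact the full orthogonal projection onto the $0$-eigenspace of $A_{\G_R}$: writing $A_{\G_R} = J_m \otimes A_G$ with $G = K_{m_1'} \otimes \cdots \otimes K_{m_n'}$, the eigenvalues of $A_{\G_R}$ are products $\lambda\mu$ with $\lambda \in \{m, 0\}$ an eigenvalue of $J_m$ and $\mu$ an eigenvalue of $A_G$; since every residue field $R_j/M_j$ has at least two elements, each $K_{m_j'}$ has only the nonzero eigenvalues $m_j'-1$ and $-1$, so $A_G$ is nonsingular and $\lambda\mu = 0$ forces $\lambda = 0$. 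Hence the $0$-eigenspace is exactly $\ker(J_m)\otimes \mathbb{C}^{|R|/m}$ with projection $E_2 \otimes I$, and Proposition \ref{plus-minus commu} applies to give $P_2\vec{e}_l = \pm P_2\vec{e}_j$ whenever QFR occurs from $v_j$ to $v_l$.

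Next I would rewrite this identity in the tensor coordinates induced by $J_m \otimes A_G$, factoring $\vec{e}_j = \vec{e}_{a_j}\otimes\vec{e}_{b_j}$ and $\vec{e}_l = \vec{e}_{a_l}\otimes\vec{e}_{b_l}$ with $a_j,a_l \in \{1,\dots,m\}$ indexing the $J_m$-factor. Since $P_2 = E_2\otimes I$, this yields $P_2\vec{e}_j = (E_2\vec{e}_{a_j})\otimes\vec{e}_{b_j}$ and similarly for $l$, where $E_2\vec{e}_a$ is the $a$th column of $E_2 = I_m - \tfrac1m J_m$, i.e. the vector with entry $\tfrac{m-1}{m}$ in position $a$ and $-\tfrac1m$ elsewhere. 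The required relation $P_2\vec{e}_l = \pm P_2\vec{e}_j$ then becomes an identity between two vectors, each supported on a single $G$-block.

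The proof concludes with a short case split on whether $b_j = b_l$. If $b_j \neq b_l$ the two sides have disjoint supports, so the $\pm$ relation forces $E_2\vec{e}_{a_j} = E_2\vec{e}_{a_l} = \vec{0}$; but the entry $\tfrac{m-1}{m}$ is nonzero for $m \geq 2$, so this branch survives only when $m = 1$. If $b_j = b_l$, then $v_j \neq v_l$ forces $a_j \neq a_l$ and the identity collapses to $E_2\vec{e}_{a_l} = \pm E_2\vec{e}_{a_j}$, two distinct columns of $E_2$ equal up to sign; comparing the entries in rows $a_j$, $a_l$ and in any further row excludes the $+$ sign and makes the $-$ sign possible exactly when no further row exists, that is, when $m = 2$. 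Together these give $m \in \{1,2\}$. This mirrors the local case, Theorem \ref{m12}, with $P_2$ playing the role that $E_3$ plays there.

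The mechanical parts — the Kronecker bookkeeping and the entrywise column comparison of $E_2$ — are routine. The step I expect to require the most care is the identification in the first paragraph: one must verify that $A_G$ has no zero eigenvalue so that $P_2$ is genuinely the spectral projection onto an eigenspace (and hence a legitimate input to Proposition \ref{plus-minus commu}), and one must not overlook the degenerate value $m = 1$, which escapes the contradiction and must be recorded as an allowed outcome rather than dismissed.
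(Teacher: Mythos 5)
Your proposal is correct and follows essentially the same route as the paper: apply Proposition \ref{plus-minus commu} to the projection $P_2 = E_2 \otimes I_{m_1'} \otimes \cdots \otimes I_{m_n'}$ and show the $\pm$ column condition fails for $m > 2$. You supply the details the paper leaves implicit (the nonsingularity of $A_G$ guaranteeing that $P_2$ is the full $0$-eigenprojection, and the entrywise column comparison), but the underlying argument is the same.
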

\begin{proof}
  Assume that QFR occurs in $\G_R$ from a vertex $v_j$ to a vertex $v_l$. If $m > 2$, then we would have $P_2 \vec{e}_l \neq \pm P_2 \vec{e}_j$, which contradicts Proposition \ref{plus-minus commu}.
\end{proof}

The above theorem leads to the following corollary. 

\begin{cor}\label{evenonly}
    Let $R$ be a finite commutative ring with identity. If $\QFR$ occurs in $\G_R$, then $|R|$ is even. 
\end{cor}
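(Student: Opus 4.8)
The plan is to deduce Corollary \ref{evenonly} as a direct consequence of Theorem \ref{m=1,2} together with the structure theory of finite local rings. Recall the setup: writing $R \cong R_1 \times \cdots \times R_n$ as a product of finite local rings with maximal ideals $M_j$ of sizes $m_j$, we have set $m = m_1 m_2 \cdots m_n$, and Theorem \ref{m=1,2} tells us that if $\QFR$ occurs in $\G_R$ then $m = 1$ or $m = 2$. So the task reduces to translating the conclusion ``$m \in \{1,2\}$'' into the statement ``$|R|$ is even.''

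First I would recall the standard fact that for a finite local ring $R_j$ with maximal ideal $M_j$, the residue field $R_j/M_j$ is a finite field, say of order $q_j = p_j^{d_j}$ for some prime $p_j$. Since $R_j$ is finite, $|R_j| = m_j \cdot q_j$ where $m_j = |M_j|$, and crucially $|R_j|$ is a power of the single prime $p_j$ (the characteristic of the residue field); in particular $m_j = |M_j|$ is also a power of $p_j$. The key observation is then: if $m = 2$, then since $m = \prod_j m_j$ and $2$ is prime, exactly one factor $m_j$ equals $2$ and all others equal $1$. For that distinguished index, $m_j = 2$ forces the residue characteristic $p_j = 2$, so $2 \mid |R_j|$ and hence $2 \mid |R|$.

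The remaining case is $m = 1$, meaning every $m_j = 1$, i.e.\ every $M_j = \{0\}$ and every $R_j$ is a finite field $\mathbb{F}_{q_j}$. Here $m$ alone does not force evenness, so I would invoke the sharper field-level result already available: by Theorem \ref{q=2}, a finite field $\mathbb{F}_{q_j}$ admits $\QFR$ on its unitary Cayley graph only when $q_j = 2$. The point is that $\QFR$ in $\G_R$ should force $\QFR$-type behavior, or at least the strongly-cospectral necessary condition of Proposition \ref{plus-minus commu}, on each tensor factor. Thus I expect that when $m=1$ the occurrence of $\QFR$ in $\G_R = \bigotimes_j \G_{\mathbb{F}_{q_j}}$ forces at least one $q_j = 2$ (indeed one suspects all of them must be even, matching the earlier theorem), giving $2 \mid |R|$ again. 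Either way, both cases yield that $2$ divides $|R|$, so $|R|$ is even.

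The main obstacle is the $m=1$ case: I must justify that $\QFR$ on the tensor product $\bigotimes_j \G_{\mathbb{F}_{q_j}}$ genuinely rules out all $q_j$ odd, rather than relying only on the crude parameter $m$. The cleanest route is to apply Proposition \ref{plus-minus commu} to an orthogonal projection built as a tensor product of the factor projections $E_2^{(j)} = \frac{1}{q_j}(q_j I - J)$ of the individual complete graphs $K_{q_j}$; the strongly-cospectral condition $P_r \vec e_l = \pm P_r \vec e_j$ then descends to the factors and, exactly as in the proof of Theorem \ref{q=2} (cf.\ \cite[Theorem 2.3]{MY}), is violated whenever some $q_j \ge 3$. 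A subtlety to handle carefully is that in the tensor product the vertices $v_j, v_l$ correspond to tuples, so the $\pm$ sign and the descent to each coordinate must be tracked component-wise; I would verify that for at least one coordinate the corresponding factor projections of the two coordinates cannot agree up to sign unless that factor field is $\mathbb{F}_2$. Granting this, the corollary follows immediately from Theorem \ref{m=1,2} by the prime-factorization argument above.
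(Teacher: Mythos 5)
Your proposal is correct in substance and ultimately rests on the same two ingredients as the paper: Theorem \ref{m=1,2} to control $m$, and Proposition \ref{plus-minus commu} applied to the projection $\bigotimes_j\bigl(I_{|R_j|}-\tfrac{1}{|R_j|}J_{|R_j|}\bigr)$ in the all-fields case. The difference is logical organization: the paper argues by contraposition, assuming $|R|$ odd, which immediately forces $m=1$ (each $m_j$ is a power of an odd prime) and makes \emph{every} tensor factor an odd-order field, so the strong-cospectrality condition fails for \emph{all} pairs of distinct vertices with no bookkeeping needed. You argue directly, which splits into the cases $m=2$ (where evenness is immediate from $|M_j|\mid|R_j|$) and $m=1$, and in the latter case requires the finer coordinate-wise statement that only the coordinates in which the two vertices differ must be $\mathbb{F}_2$. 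That finer statement is true and provable (distinct columns of $I-\tfrac1qJ$ are proportional only when $q=2$, and a tensor product of nonzero vectors determines its factors up to scalars), and it actually yields slightly more information than the paper's proof. One caution: your intermediate assertion that the condition ``is violated whenever some $q_j\ge 3$'' is false as literally stated --- the paper's own example $\G_{\mathbb{Z}_6}=\G_{\mathbb{F}_3}\otimes\G_{\mathbb{F}_2}$ admits QFR, and Theorem \ref{QFR finite com ring} gives infinitely many such cases. Your subsequent ``subtlety'' paragraph correctly replaces this with the coordinate-wise version, and that is the claim you must actually prove; with it, the argument closes.
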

\begin{proof}
    As mentioned above, we can write $R \cong R_1 \times R_2 \times \ldots \times R_t$, where $R_j$ is a finite local ring with a unique maximal ideal $M_j$ of size $m_j$ for all $j \in \{1,2\ldots,t\}$. Set $m = m_1 m_2 \ldots m_t$. Suppose that $|R|$ is odd and $\G_R$ admits QFR. By Theorem \ref{m=1,2}, $m = m_1 = m_2 = \dots = m_t = 1$. Thus, $A_{\G_R} = K_{|R_1|} \otimes K_{|R_2|} \otimes \cdots \otimes K_{|R_t|}$. It has an orthogonal projection of the form 
    \[
    B = \bigotimes_{j = 1}^t I_{|R_j|} - \frac{1}{|R_j|} J_{|R_j|}.
    \]
    In fact, this matrix is for the eigenvalue $(-1)^t$ of $A_{\G_R}$. Note that for all $j = 1, 2, \ldots, t$, $R_j$ is a finite field of odd characteristic and $B \vec{e}_l \neq \pm B \vec{e}_k$ for any distinct $k, l \in \{1, 2, \ldots, |R| \}$. Again, by Proposition \ref{plus-minus commu}, this is a contradiction.
\end{proof}

\smallskip

Given graphs $X$ and $Y$ with the adjacency matrix of $X$ having the spectral decomposition $A_X = \sum_{r = 1}^d \theta_r E_r$, by Lemma 4.2 of \cite{CG15}, we have 
\[
H_{X \times Y}(t) = \sum_{r = 1}^d E_r \otimes H_Y(\theta_r t). 
\]
This leads to the following results.

\begin{thm} \label{QFR finite com ring}
     Let $\mathbb{F}_q$ be the finite field with an odd prime power $q$. The graph $\G_{\mathbb{F}_q} \otimes \G_{\mathbb{F}_2}$ has $\QFR$ at time $t = \frac{2\pi}{q}$.
\end{thm}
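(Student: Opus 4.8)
The plan is to exploit the tensor-product formula for the transition matrix recorded just above, namely $H_{X \times Y}(t) = \sum_{r} E_r \otimes H_Y(\theta_r t)$, applied with $X = \G_{\mathbb{F}_q}$ and $Y = \G_{\mathbb{F}_2}$. First I would assemble the spectral data. Since $\mathbb{F}_q$ is a field, $\G_{\mathbb{F}_q}$ is the complete graph $K_q$, and by Proposition \ref{specG_R} its eigenvalues are $\theta_1 = q-1$ and $\theta_2 = -1$, with orthogonal projections $E_1 = \frac{1}{q}J_q$ and $E_2 = I_q - \frac{1}{q}J_q$ (a quick check confirms $(q-1)E_1 + (-1)E_2 = J_q - I_q = A_{K_q}$). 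Likewise $\G_{\mathbb{F}_2} = K_2$, whose transition matrix is computed directly as
$$
H_{\G_{\mathbb{F}_2}}(s) = \begin{bmatrix} \cos s & i\sin s \\ i\sin s & \cos s \end{bmatrix}.
$$

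The central observation, and the reason the specific time $t = \frac{2\pi}{q}$ is singled out, is a coincidence modulo $2\pi$. At this $t$ one has $(q-1)t = 2\pi - t$, and since $H_{\G_{\mathbb{F}_2}}$ depends on its argument only through $\cos$ and $\sin$, it is $2\pi$-periodic, so $H_{\G_{\mathbb{F}_2}}\big((q-1)t\big) = H_{\G_{\mathbb{F}_2}}(-t)$. Substituting into the tensor formula, the two eigenspace contributions therefore merge, and using $E_1 + E_2 = I_q$ from the spectral theorem I obtain
$$
H_{\G_{\mathbb{F}_q} \otimes \G_{\mathbb{F}_2}}(t) = (E_1 + E_2) \otimes H_{\G_{\mathbb{F}_2}}(-t) = I_q \otimes H_{\G_{\mathbb{F}_2}}(-t).
$$

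Finally I would read off QFR from this clean form. For any vertex $(a,0)$ of the tensor graph, writing $\vec{e}_{(a,0)} = \vec{e}_a \otimes \vec{e}_0$, the above gives
$$
H_{\G_{\mathbb{F}_q} \otimes \G_{\mathbb{F}_2}}(t)\,\vec{e}_{(a,0)} = \cos t\,\vec{e}_{(a,0)} - i\sin t\,\vec{e}_{(a,1)},
$$
so with $\alpha = \cos t$ and $\beta = -i\sin t$ we have $|\alpha|^2 + |\beta|^2 = 1$, exhibiting QFR from $(a,0)$ to $(a,1)$. The one point demanding genuine care is non-periodicity, i.e. $\beta \neq 0$: because $q \geq 3$ is odd, $t = \frac{2\pi}{q}$ lies strictly between $0$ and $\pi$, so $\sin t \neq 0$. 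I do not expect a substantive obstacle here; the entire argument hinges on the modular identity $(q-1)t \equiv -t \pmod{2\pi}$ and the resulting collapse of the spectral sum, which is precisely where the choice of $t$ is forced, and the remaining verifications are routine.
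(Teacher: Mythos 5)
Your proposal is correct and follows essentially the same route as the paper: both invoke the product formula $H_{X\times Y}(t)=\sum_r E_r\otimes H_Y(\theta_r t)$, observe that at $t=\frac{2\pi}{q}$ the identity $(q-1)t\equiv -t \pmod{2\pi}$ makes the two summands share the factor $H_{\G_{\mathbb{F}_2}}(-t)$, collapse the sum to $I_q\otimes H_{\G_{\mathbb{F}_2}}(-t)$ via $E_1+E_2=I_q$, and read off QFR from the $2\times 2$ block. Your explicit check that $\beta=-i\sin(\frac{2\pi}{q})\neq 0$ (ruling out periodicity) is a small point the paper leaves implicit, but otherwise the arguments coincide.
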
 
\begin{proof}
  Recall that $H_{\G_{\mathbb{F}_2}}(t) = 
  \begin{bmatrix}
      \cos({t}) & i \sin({t}) \\
      i \sin({t}) & \cos({t})
  \end{bmatrix}$ and the eigenvalues of $\G_{\mathbb{F}_q}$ are $\theta_1 = q - 1$ and $\theta_2 = -1$. Choosing $t = \frac{2\pi}{q}$ implies that for $r = 1, 2$,
  $$
  H_{\G_{\mathbb{F}_2}}(\theta_r t) = 
  \begin{bmatrix}
      \cos(\theta_r \frac{2\pi}{q}) & i \sin(\theta_r \frac{2\pi}{q}) \\
      i \sin(\theta_r \frac{2\pi}{q}) & \cos(\theta_r \frac{2\pi}{q}) 
  \end{bmatrix}
  =
  \begin{bmatrix}
      \cos(\frac{2\pi}{q}) & -i \sin( \frac{2\pi}{q}) \\
      -i \sin( \frac{2\pi}{q}) & \cos(\frac{2\pi}{q}) 
  \end{bmatrix}.
  $$
    Thus 
    \[
H_{\G_{\mathbb{F}_q} \times \G_{\mathbb{F}_2}}(t) = \sum_{r = 1}^2 E_r \otimes H_{\G_{\mathbb{F}_2}}(\theta_r t) = I_q \otimes \begin{bmatrix}
      \cos(\frac{2\pi}{q}) & -i \sin( \frac{2\pi}{q}) \\
      -i \sin( \frac{2\pi}{q}) & \cos(\frac{2\pi}{q}) 
  \end{bmatrix}. 
\]
Since $|\cos(\frac{2\pi}{q})|^2 + |-i \sin( \frac{2\pi}{q})|^2 = 1$, the graph $\G_{\mathbb{F}_q} \otimes \G_{\mathbb{F}_2}$ admits $\QFR$ between $v_1$ and $v_2$.
\end{proof}

To compare with Theorem \ref{QFR finite com ring}, it is natural to ask what happens if the second component is changed from $\G_{\mathbb{F}_2}$ to $\G_{\mathbb{Z}_4}$ or $\G_{\mathbb{Z}_{2} [ x ] /(x^2)}$. To the best of the authors’ knowledge, the resulting calculations become significantly more complicated. Nevertheless, we can present a partial result as follows. 

\begin{thm} \label{QFR finite com ring Z4}
      Let $\mathbb{F}_q$ be the finite field with an odd prime power $q$. If $R = \mathbb{Z}_4$ or $\mathbb{Z}_{2} [ x ] /(x^2)$, then $\QFR$ cannot occur in
      $\G_{\mathbb{F}_q} \otimes \G_R$ at any time $\tau \in \{\frac{k \pi}{q} \mid k \in \mathbb{Z} \}$.
\end{thm}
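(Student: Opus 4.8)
The plan is to combine the tensor‑product transition‑matrix formula recalled just before the statement with the explicit analysis of $\G_R\cong C_4$ carried out in the earlier example. First I would record that for both $R=\mathbb{Z}_4$ and $R=\mathbb{Z}_2[x]/(x^2)$ the ring is local of order $4$ with maximal ideal of size $2$, so by Proposition \ref{graph}(ii) the graph $\G_R$ is the complete multipartite graph on the two cosets of $M$, i.e.\ $\G_R\cong K_{2,2}=C_4$; in particular both cases share one and the same transition matrix $H_{\G_R}(s)$, whose entries are built from $\cos^2 s$, $\sin^2 s$ and $\sin s\cos s$. Writing the eigenvalues of $\G_{\mathbb{F}_q}$ as $\theta_1=q-1,\ \theta_2=-1$ with orthogonal projections $E_1=\tfrac1q J_q$ and $E_2=I_q-\tfrac1q J_q$, the formula $H_{X\times Y}(t)=\sum_r E_r\otimes H_Y(\theta_r t)$ gives
\[
H_{\G_{\mathbb{F}_q}\times \G_R}(\tau)=E_1\otimes H_{\G_R}\bigl((q-1)\tau\bigr)+E_2\otimes H_{\G_R}(-\tau).
\]

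The crucial step, which I expect to carry the whole argument, is a periodicity collapse at the special times $\tau=k\pi/q$. Since $(q-1)\tau-(-\tau)=q\tau=k\pi$ is an integer multiple of $\pi$, and since each entry of $H_{\G_R}(s)$ is $\pi$‑periodic (because $\cos^2 s$, $\sin^2 s$ and $\sin s\cos s=\tfrac12\sin 2s$ all have period $\pi$), I would conclude that $H_{\G_R}\bigl((q-1)\tau\bigr)=H_{\G_R}(-\tau)$. Feeding this back and using $E_1+E_2=I_q$ yields the block‑diagonal form
\[
H_{\G_{\mathbb{F}_q}\times \G_R}(\tau)=I_q\otimes H_{\G_R}(-\tau).
\]

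With the transition matrix block‑diagonal, QFR in the tensor graph reduces to QFR inside a single copy of $\G_R$. Concretely, the column at a vertex $(a_0,b_0)$ is $\vec{e}_{a_0}\otimes H_{\G_R}(-\tau)\vec{e}_{b_0}$, which is supported only on the block indexed by $a_0$; hence any QFR target $(a_1,b_1)$ must have $a_1=a_0$, and the QFR condition becomes $H_{\G_R}(-\tau)\vec{e}_{b_0}=\alpha\vec{e}_{b_0}+\beta\vec{e}_{b_1}$ with $\beta\neq 0$, i.e.\ precisely $\QFR$ in $\G_R\cong C_4$ at time $-\tau$. The earlier example treating $\G_{\mathbb{Z}_4}$ shows that $C_4$ admits $\QFR$ only when the time is congruent to $\tfrac{\pi}{2}$ modulo $\pi$. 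I would finish with a short parity argument: if $-k\pi/q\equiv\tfrac{\pi}{2}\pmod{\pi}$, then $-2k=q(1+2j)$ for some integer $j$, forcing an even integer to equal $q$ times an odd integer; as $q$ is odd this is impossible, so no $\tau\in\{k\pi/q\}$ produces $\QFR$.

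The main obstacle is really the recognition that restricting to $\tau\in\{k\pi/q\}$ is exactly what makes the two arguments $(q-1)\tau$ and $-\tau$ congruent modulo the period $\pi$ of $H_{\G_R}$; once that is seen, the block‑diagonalisation and the reduction to the already‑understood behaviour of $C_4$ are routine. This also explains transparently why only a partial result is available: away from these times the blocks $H_{\G_R}\bigl((q-1)\tau\bigr)$ and $H_{\G_R}(-\tau)$ genuinely differ, the matrix no longer block‑diagonalises, and cross‑block revival can no longer be excluded by this elementary argument.
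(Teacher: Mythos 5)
Your proposal is correct and follows essentially the same route as the paper: apply $H_{X\times Y}(t)=\sum_r E_r\otimes H_Y(\theta_r t)$, observe that at $\tau=k\pi/q$ the $\pi$-periodicity of $H_{\G_R}$ forces $H_{\G_R}((q-1)\tau)=H_{\G_R}(-\tau)$ and hence the collapse to $I_q\otimes H_{\G_R}(-\tau)$, and then rule out QFR within a block. Your closing parity argument (that $-k\pi/q\equiv\pi/2\pmod{\pi}$ would force an even integer to equal $q$ times an odd one) actually spells out a step the paper leaves implicit in its terse final claim, so your write-up is, if anything, more complete.
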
 
\begin{proof}
   Recall that
    \begin{align*}
           H_{\G_R}(t) 
           = \begin{bmatrix}
\cos^2(t) & i\sin(t)\cos(t) & -\sin^2(t) & i\sin(t)\cos(t) \\
i\sin(t)\cos(t) & \cos^2(t) & i\sin(t)\cos(t) & -\sin^2(t) \\
-\sin^2(t) & i\sin(t)\cos(t) & \cos^2(t) & i\sin(t)\cos(t) \\
i\sin(t)\cos(t) & -\sin^2(t) & i\sin(t)\cos(t) & \cos^2(t)
\end{bmatrix}.
    \end{align*} 
Note that $\theta_1 = q - 1$ and $\theta_2 = -1$ are the eigenvalues of $\G_{\mathbb{F}_q}$. Let $\tau = \frac{k \pi}{q}$, where $k \in \mathbb{Z}$. Then $H_{\G_R}(\theta_1 \tau) = H_{\G_R}(\theta_2 \tau)$ which implies 
    \begin{align*}
       \mathcal{H} := H_{\G_{\mathbb{F}_q} \times \G_R}(\tau) = \sum_{r = 1}^2 E_r \otimes H_{\G_R}(\theta_r \tau) 
 = I_q \otimes H_{\G_R}(- \tau)
 \end{align*}
 and $|\mathcal{H}_{j, j}|^2 + |\mathcal{H}_{j, l}|^2 \neq 1$ if $|\mathcal{H}_{j, j}| \neq 1$. This completes the proof.
\end{proof}

\begin{rmk}
    In the proof of the above theorem, we observe that $H_{\G_R}(\theta_1 t) = H_{\G_R}(\theta_2 t)$ if and only if $t = \frac{k\pi}{q}$, where $k \in \mathbb{Z}$.
\end{rmk}

We conclude the paper by presenting the results regarding the existence of $\QFR$ on unitary Cayley graphs of certain small finite commutative rings.

\begin{ex}
    We consider unitary Cayley graphs over $\mathbb{Z}_n$ for $n \le 29$ except $n = 12, 20, 28$. By Corollary \ref{evenonly}, if $n$ is odd, then $\G_{\mathbb{Z}_n}$ does not have QFR. Thus, only even positive integers $n$ need to be investigated.  
    
    Theorem \ref{mainlocal} implies that the graph $\G_{\mathbb{Z}_n}$ has QFR for $n=2$ and $4$ while for $n = 8$ and $16$, $\G_{\mathbb{Z}_n}$ does not possess QFR. According to Theorem \ref{m=1,2}, for $n = 18$ and $24$, QFR does not occur in $\G_{\mathbb{Z}_n}$. 
    For $n = 6, 10, 14, 22$ and $26$, $\G_{\mathbb{Z}_n}$ admits QFR by Theorem \ref{QFR finite com ring}. 
\end{ex}

\begin{rmk}
    Our approach in this paper does not apply to certain cases, such as the graphs $\G_{\mathbb{Z}_{12}}, \G_{\mathbb{Z}_{20}}, \G_{\mathbb{Z}_{28}}$ and $\G_{\mathbb{Z}_{30}}$. Further developments and new techniques are required for a complete characterization for finite commutative rings that permit QFR to occur in their unitary Cayley graphs.
\end{rmk}

\bigskip

\section*{Acknowledgments}
The first author was supported by the Petchra Pra Jom Klao Master’s Degree Research Scholarship from King Mongkut’s University of Technology Thonburi (Grant No. 11/2566). This research was funded by King Mongkut’s University of Technology Thonburi (KMUTT), Thailand Science Research and Innovation (TSRI), and the National Science, Research, and Innovation Fund (NSRF) under Fiscal Year 2024 Grant No. FRB670073/0164.


\end{document}